\documentclass[11pt]{amsart}

\usepackage[article]{robertpreamble}
\usepackage{geometry}
\makeindex 

\begin{document}


\title[On a Problem of M. Kac on Laplace Distributions]{On a Problem of M. Kac on Laplace Distributions}

\author[Robert Koirala]{Robert Koirala}
\address{Department of Mathematics, University of California San Diego}
\email{rkoirala@ucsd.edu}
\date{\today}

\dedicatory{Dedicated to the memory of Dan Stroock with admiration and gratitude.}

\begin{abstract}
    We give counterexamples to a problem of M.~Kac in the \emph{Scottish Book}, which asks whether a certain nonlinear operation on two characteristic functions characterizes Laplace distributions, in analogy with the Cram\'er--L\'evy theorem for Gaussian distributions. We then give an affirmative answer to a refined version of the problem. Finally, we develop a general framework for such characterization problems, construct generalized counterexamples, and pose some open questions.
\end{abstract}

\maketitle 



\section{Introduction}

On September 11, 1938, M.~Kac posed in the \emph{Scottish Book} \cite[Problem 178]{MR3242261}
a characterization problem concerning the class $\{(1+a\xi^2)^{-1}: a>0,\ \xi\in\R\}$ of characteristic functions arising from centered Laplace distributions on $\R$, that is,
probability measures with density $\frac{\beta}{2}e^{-\beta|x|}$, $\beta>0$.

\begin{problem}\label{problem-kac}
    Let
    \begin{align}\label{eq:def-of-phi}
        \Phi(x,y)=\frac{1}{\tfrac{1}{x}+\tfrac{1}{y}-1}.
    \end{align}
    Prove that if
    \begin{align}
        \Phi\left(
            \int_{-\infty}^{+\infty} e^{i\xi t}\,d\sigma_1(t),
            \int_{-\infty}^{+\infty} e^{i\xi t}\,d\sigma_2(t)
        \right)
        =\frac{1}{1+\xi^2},
    \end{align}
    then $\sigma_1$ and $\sigma_2$ are centered Laplace distributions.
\end{problem}

The main purpose of this note is threefold. First, in Section~\ref{sec:counter-examples}, we construct counterexamples to Problem~\ref{problem-kac}. Second, in Section~\ref{sec:affirmative-solution}, we identify additional assumptions on
$\sigma_1$ and $\sigma_2$ under which Problem~\ref{problem-kac} admits an affirmative answer. Third, in Section~\ref{sec:general}, we place Kac's problem into a broader framework of operations $\Phi$ on characteristic functions, discuss several families of examples, and end with some open questions. We also include, in Appendix~\ref{sec:discrete}, a discrete analog of the theory.

Kac's question belongs to the classical theory of characterization and factorization of probability laws via characteristic functions. The guiding example is the Cram\'er--L\'evy theorem,
which asserts that if
\begin{align*}
    \left(\int_{-\infty}^{+\infty} e^{i\xi t}\,d\sigma_1(t)\right)
    \left(\int_{-\infty}^{+\infty} e^{i\xi t}\,d\sigma_2(t)\right)
    = e^{-\xi^2/4},
\end{align*}
then $\sigma_1$ and $\sigma_2$ are Gaussian distributions. See, for instance, \cite{MR1545629} or \cite[Theorem 2.2.1]{MR4573029}. This motivates the following general question.

\begin{question}\label{question:intro}
    If a prescribed operation on two characteristic functions belongs to a family of characteristic functions, must each factor belong to that same family?
\end{question}
From this perspective, Kac's problem asks whether the centered Laplace family admits a factorization property analogous to that of the Gaussian family, but now with multiplication replaced by the nonlinear operation $\Phi$ in \eqref{eq:def-of-phi}. In the setting of Lévy processes, $\Phi$ corresponds to adding two independent processes evaluated at a common independent exponential time. Moreover, if $T\sim \mathrm{Exp}(1)$ is independent of a standard Brownian motion $(B_t)$, then $B_{2aT}$ has characteristic function $(1+a\xi^2)^{-1}$, see Remark~\ref{remark-probabilisitc-interpretation} for details. For background on characteristic functions, decomposition problems, and related characterization questions in the classical setting where the operation is multiplication, see, for example, \cite{MR346969,MR189085,MR428382,MR133854,MR346874,MR810001,MR4573029}.

\subsection*{Acknowledgments}
We are grateful to Bennett Chow for constant support and motivation. We thank Patrick Fitzsimmons for questions that prompted our consideration of Appendix~\ref{sec:discrete}.

\section{Counterexamples}\label{sec:counter-examples}

Let $f_1$ and $f_2$ be characteristic functions on $\R$. Then Problem~\ref{problem-kac} is equivalent to asking whether
\begin{align}\label{eq:defining-equation}
    \frac{1}{f_1(\xi)}+\frac{1}{f_2(\xi)}=2+\xi^2 \qquad (\xi\in\R)
\end{align}
implies that $f_1$ and $f_2$ are characteristic functions of centered Laplace distributions, that is,
\begin{align}\label{eq:laplace}
    f_j(\xi)=\frac{1}{1+a_j\xi^2}
    \qquad\text{for some }a_j>0,\quad a_1+a_2=1,\quad j\in\{1,2\}.
\end{align}

In our first counterexample, the functions $f_j$ are neither real-valued nor even.

\begin{proposition}\label{prop:first-counter-example}
    The functions
    \begin{align}
        f_1(\xi)\coloneqq \left(1-\frac{i\xi}{2}+\frac{\xi^2}{2}\right)^{-1},
        \qquad
        f_2(\xi)\coloneqq \left(1+\frac{i\xi}{2}+\frac{\xi^2}{2}\right)^{-1}
    \end{align}
    are characteristic functions on $\R$ satisfying \eqref{eq:defining-equation}, but neither is of the form \eqref{eq:laplace}. In particular, Problem~\ref{problem-kac} is false as stated.
\end{proposition}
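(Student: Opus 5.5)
The plan is to factor each quadratic and recognize $f_1,f_2$ as products of characteristic functions of exponential-type laws. The identity \eqref{eq:defining-equation} is immediate, since
\begin{align*}
\frac{1}{f_1(\xi)}+\frac{1}{f_2(\xi)}=\Bigl(1-\tfrac{i\xi}{2}+\tfrac{\xi^2}{2}\Bigr)+\Bigl(1+\tfrac{i\xi}{2}+\tfrac{\xi^2}{2}\Bigr)=2+\xi^2 .
\end{align*}
Neither $f_j$ is of the form \eqref{eq:laplace}, because such functions are real-valued, whereas $f_1(1)=(3/2-i/2)^{-1}\notin\R$; likewise $f_2(1)\notin\R$. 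Note also that $f_2(\xi)=f_1(-\xi)=\overline{f_1(\xi)}$. Hence once $f_1$ is shown to be the characteristic function of a random variable $X$, $f_2$ is automatically that of $-X$. So the only real work is showing that $f_1$ is a characteristic function.

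For this, factor $1-\tfrac{i\xi}{2}+\tfrac{\xi^2}{2}$ as $(1-ia\xi)(1+ib\xi)$ with $a,b$ real. Expanding gives $1+i(b-a)\xi+ab\,\xi^2$, so we need $ab=1/2$ and $a-b=1/2$. These give $a^2-a/2-1/2=0$, whose roots are $a=1$ and $a=-1/2$. Taking $a=1$, $b=1/2$ yields
\begin{align*}
f_1(\xi)=\frac{1}{1-i\xi}\cdot\frac{1}{1+i\xi/2}.
\end{align*}
Both $a,b$ are positive, which is the step one must check works out, and here it does. The factor $(1-i\xi)^{-1}$ is the characteristic function of $E_1\sim\mathrm{Exp}(1)$. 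The factor $(1+i\xi/2)^{-1}$ is the characteristic function of $-E_2$, where $E_2$ is exponential with mean $1/2$.

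Taking $E_1,E_2$ independent, $f_1$ is the characteristic function of $X=E_1-E_2$, an asymmetric Laplace law. One can also write down its density, $\tfrac23 e^{-x}$ for $x>0$ and $\tfrac23 e^{2x}$ for $x<0$, and verify the Fourier transform directly as a check. Then $f_2$ is the characteristic function of $E_2-E_1$. This completes the argument, and shows that Problem~\ref{problem-kac} is false as stated.
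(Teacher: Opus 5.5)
Your proposal is correct and is essentially the paper's proof. Both identify $f_1$ as the characteristic function of $E_1-E_2$ with independent $E_1\sim\mathrm{Exp}(1)$ and $E_2\sim\mathrm{Exp}(2)$, obtain $f_2$ from the reflected variable $E_2-E_1$, and rule out the Laplace form by non-real-valuedness; the only difference is that the paper states the factorization $(1-i\xi)(1+i\xi/2)$ directly rather than solving for it.
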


\begin{proof}
    Let $X\sim \mathrm{Exp}(1)$ and $Y\sim \mathrm{Exp}(2)$ be independent exponential random variables. Their characteristic functions are $\varphi_X(\xi)=(1-i\xi)^{-1}$ and $\varphi_Y(\xi)=(1-i\xi/2)^{-1}.$ Since $\varphi_{X-Y}(\xi)=\varphi_X(\xi)\varphi_Y(-\xi),$ we obtain
    \begin{align}\label{eq-x-y}
        \frac{1}{\varphi_{X-Y}(\xi)}
        =\frac{1}{\varphi_X(\xi)\varphi_Y(-\xi)}
        =1-\frac{i\xi}{2}+\frac{\xi^2}{2}
        =\frac{1}{f_1(\xi)}.
    \end{align}
    Similarly, $\varphi_{Y-X}(\xi)=f_2(\xi)$.
    Hence $f_1=\varphi_{X-Y}$ and $f_2=\varphi_{Y-X}$ are characteristic functions that satisfy \eqref{eq:defining-equation}. On the other hand, every function of the form \eqref{eq:laplace} is real-valued and even, whereas neither $f_1$ nor $f_2$ has these properties. Therefore neither $f_1$ nor $f_2$ is of the form \eqref{eq:laplace}.
\end{proof}

The next proposition shows that Problem~\ref{problem-kac} remains false even within the class of real-valued even characteristic functions.

\begin{proposition}\label{prop:first-counter-example-2}
    Fix $0<a<\frac12$. The functions
    \begin{align}\label{eq:counter-example-2}
        f_1(\xi)=\frac{2+a\xi^2}{2(1+a\xi^2)},
        \qquad
        f_2(\xi)=\frac{2+a\xi^2}{2+2\xi^2+a\xi^4}
    \end{align}
    are characteristic functions satisfying \eqref{eq:defining-equation}, but neither is of the form \eqref{eq:laplace}.
\end{proposition}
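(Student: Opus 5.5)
Three things need checking: the identity \eqref{eq:defining-equation}, that $f_1$ and $f_2$ are characteristic functions, and that neither is of the form \eqref{eq:laplace}.

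\emph{The identity.} Direct algebra gives
\[
\frac{1}{f_1}=\frac{2+2a\xi^2}{2+a\xi^2},\qquad \frac{1}{f_2}=\frac{2+2\xi^2+a\xi^4}{2+a\xi^2},
\]
and the numerators add to $(2+a\xi^2)(2+\xi^2)$, so $1/f_1+1/f_2=2+\xi^2$.

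\emph{$f_1$ is a characteristic function.} Write
\[
f_1=\tfrac12+\tfrac12\cdot\frac{1}{1+a\xi^2}.
\]
This is the characteristic function of the mixture $\tfrac12\delta_0+\tfrac12\,\mathrm{Laplace}$ with parameter $\beta=a^{-1/2}$. It is a convex combination of characteristic functions, hence a characteristic function.

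\emph{$f_2$ is a characteristic function.} This is the main step. Factor the denominator $a\xi^4+2\xi^2+2$ as a quadratic in $\xi^2$. Its discriminant is $4-8a>0$ exactly because $a<\tfrac12$, which is where the hypothesis enters. So
\[
a\xi^4+2\xi^2+2=a(\xi^2+r_1)(\xi^2+r_2),\qquad r_{1,2}=\frac{1\pm\sqrt{1-2a}}{a}>0,
\]
with $r_1>r_2$. Then
\[
f_2=\frac{2+a\xi^2}{a(\xi^2+r_1)(\xi^2+r_2)}=\frac{A}{\xi^2+r_1}+\frac{B}{\xi^2+r_2},
\]
and each term $c/(\xi^2+r)=(c/r)\cdot\bigl(1+\xi^2/r\bigr)^{-1}$ is a multiple of a Laplace characteristic function. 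Since $f_2(0)=1$, it suffices to show $A,B\ge0$; then $f_2$ is a convex combination of Laplace characteristic functions. Residues at $\xi^2=-r_j$ give
\[
A=\frac{2-ar_1}{a(r_2-r_1)},\qquad B=\frac{2-ar_2}{a(r_1-r_2)}.
\]
Here $ar_1=1+\sqrt{1-2a}<2$, so $2-ar_1>0$, and since $r_2-r_1<0$ this gives $A<0$. So the naive decomposition fails, and this sign check is the main obstacle.

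\emph{Fallback for $f_2$.} Instead, write $f_2$ as a product. Set $g(\xi)=(2+a\xi^2)/2=1+a\xi^2/2$, so that
\[
f_2=\frac{g}{(1+\xi^2/r_1)(1+\xi^2/r_2)}.
\]
Pair $g$ with the factor $(1+\xi^2/r_2)$:
\[
\frac{1+a\xi^2/2}{1+\xi^2/r_2}=\frac{ar_2}{2}+\Bigl(1-\frac{ar_2}{2}\Bigr)\frac{1}{1+\xi^2/r_2}.
\]
This is a mixture of $\delta_0$ and a Laplace law provided $0\le ar_2/2\le1$. Indeed $ar_2=1-\sqrt{1-2a}\in(0,1)$, so the condition holds. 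Hence $f_2$ is the product of this mixture characteristic function with the Laplace characteristic function $(1+\xi^2/r_1)^{-1}$. A product of characteristic functions is a characteristic function (the law of an independent sum), so $f_2$ is one.

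\emph{Neither is of the form \eqref{eq:laplace}.} We have $f_1(\xi)\to\tfrac12$ as $\xi\to\infty$, while every function in \eqref{eq:laplace} tends to $0$; so $f_1$ is excluded. For $f_2$, note that $f_2$ decays like $\xi^{-2}$ and is real and even, so decay alone does not rule it out. But if $f_2=(1+a_2\xi^2)^{-1}$, then
\[
(2+a\xi^2)(1+a_2\xi^2)=2+2\xi^2+a\xi^4.
\]
Comparing $\xi^4$ coefficients gives $a a_2=a$, so $a_2=1$. Comparing $\xi^2$ coefficients gives $a+2a_2=2$, so $a=0$, a contradiction.
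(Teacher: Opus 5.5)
Your proof is correct, and for the one substantive step (showing $f_2$ is a characteristic function) it takes a different route from the paper.

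The paper uses the same partial fraction decomposition you tried first and accepts the negative coefficient. It then inverts term by term to write the density $p_2$ as a signed combination of two Laplace densities. Finally it proves $p_2\ge 0$ via the bound $p_2(x)\ge e^{-\sqrt{\lambda_-}|x|}\,p_2(0)$, together with a somewhat fiddly explicit check that $p_2(0)\ge 0$.

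You instead factor
$f_2=\bigl(\tfrac{ar_2}{2}+(1-\tfrac{ar_2}{2})(1+\xi^2/r_2)^{-1}\bigr)\cdot(1+\xi^2/r_1)^{-1}$.
So the law of $f_2$ is the convolution of an atom-plus-Laplace mixture with a Laplace law. Positivity is then automatic, and no Fourier inversion or inequality is needed.

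What your route buys:
\begin{itemize}
\item It is shorter.
\item It gives a transparent probabilistic representation: an independent sum $\epsilon L_2+L_1$ with $\epsilon$ a Bernoulli variable and $L_j$ Laplace.
\item It carries over essentially unchanged to the discrete analog in the appendix. There $G_2$ factors the same way, into a mixture of $\delta_0$ with a geometric pgf times a geometric pgf, so you would not need the paper's coefficient-positivity argument.
\end{itemize}
What the paper's route buys is the density $p_2$ in closed form.

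Your coefficient comparison showing $f_2\neq(1+c\xi^2)^{-1}$, and the limit $f_1\to\tfrac12$, supply details the paper dismisses as ``clear.''
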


\begin{proof}
    A direct computation shows that the functions in \eqref{eq:counter-example-2} satisfy \eqref{eq:defining-equation}, and it is clear that neither is of the form \eqref{eq:laplace}. We first observe that $f_1$ is the characteristic function of the probability measure
    \begin{align}
        \frac12\delta_0+\frac{1}{4\sqrt a}e^{-|x|/\sqrt a}\,dx.
    \end{align}
    It remains to prove that $f_2$ is also a characteristic function. A partial fraction decomposition gives
    \begin{align}
        f_2(\xi)=\frac{A_+}{\xi^2+\lambda_-}+\frac{A_-}{\xi^2+\lambda_+},
    \end{align}
    where $A_\pm \coloneqq(2\sqrt{1-2a})^{-1}(\sqrt{1-2a}\pm 1),$ and $\lambda_\pm \coloneqq a^{-1}(1\pm \sqrt{1-2a}).$ Since
    \begin{align}
        \int_{\R} e^{i\xi x}\,\frac{1}{2\sqrt{\lambda}}e^{-\sqrt{\lambda}|x|}\,dx
        =\frac{1}{\xi^2+\lambda},
    \end{align}
    it follows that $f_2$ is the Fourier transform of
    \begin{align}
        p_2(x)
        =\frac{A_+}{2\sqrt{\lambda_-}}e^{-\sqrt{\lambda_-}|x|}
        +\frac{A_-}{2\sqrt{\lambda_+}}e^{-\sqrt{\lambda_+}|x|}.
    \end{align}
    We claim that $p_2\ge 0$. Since $0<a<\frac12$, we have $0<\sqrt{1-2a}<1,$ and therefore $\lambda_+>\lambda_->0,$ $A_+>0,$ and $A_-<0.$ Thus we obtain
    \begin{align}\label{eq:positivity-of-p}
        p_2(x)
        =e^{-\sqrt{\lambda_-}|x|}
        \left(
            \frac{A_+}{2\sqrt{\lambda_-}}
            +\frac{A_-}{2\sqrt{\lambda_+}}
            e^{-(\sqrt{\lambda_+}-\sqrt{\lambda_-})|x|}
        \right)
        \ge
        e^{-\sqrt{\lambda_-}|x|}
        \left(
            \frac{A_+}{2\sqrt{\lambda_-}}
            +\frac{A_-}{2\sqrt{\lambda_+}}
        \right).
    \end{align}
    A straightforward simplification shows that
    \begin{align}
        \frac{A_+}{2\sqrt{\lambda_-}}+\frac{A_-}{2\sqrt{\lambda_+}}
        =
        \frac{(1+\sqrt{1-2a})^{3/2}-(1-\sqrt{1-2a})^{3/2}}{4\sqrt{2}\sqrt{1-2a}}
        \ge 0.
    \end{align}
    Combining this with \eqref{eq:positivity-of-p}, we conclude that $p_2(x)\ge 0$ for all $x\in\R$. Finally, since $f_2(0)=1$, the total mass of $p_2$ is equal to $1$. Therefore $p_2$ is a probability density, and hence $f_2$ is a characteristic function.
\end{proof}

\section{A positive result under geometric infinite divisibility}\label{sec:affirmative-solution}

We now impose additional assumptions on the characteristic functions $f_1$ and $f_2$ which, together with \eqref{eq:defining-equation}, force them to be of the form \eqref{eq:laplace}.

\begin{definition}[{\cite{MR773445}}]
    A characteristic function $f$ is said to be \textit{geometrically infinitely divisible} if for every $p\in(0,1)$ there exists a characteristic function $g_p$ such that
    \begin{align}
        f(\xi)=\frac{p\,g_p(\xi)}{1-(1-p)g_p(\xi)}.
    \end{align}
\end{definition}

\begin{example}
    The function $f(\xi)\coloneqq (1+a\xi^2)^{-1}$ is geometrically infinitely divisible. Indeed, one may take $g_p(\xi)\coloneqq (1+pa\xi^2)^{-1}.$
\end{example}

We shall use the following characterization of geometric infinite divisibility.

\begin{theorem}[{\cite{MR773445}; \cite[Theorem 2.2]{steutel1990set}}]\label{theorem:characterization}
    A characteristic function $f$ is geometrically infinitely divisible if and only if $\exp\left(1-\tfrac{1}{f(\xi)}\right)$ is an infinitely divisible characteristic function.
\end{theorem}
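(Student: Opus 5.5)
We prove the two implications of Theorem~\ref{theorem:characterization} separately. The bridge between $f$ and $g_p$ is to solve the defining relation for $g_p$:
\begin{align*}
    g_p=\frac{f}{p+(1-p)f}, \qquad\text{equivalently}\qquad \frac{1}{g_p}=1+p\left(\frac{1}{f}-1\right).
\end{align*}
Write $\psi\coloneqq 1-\tfrac1f$. Then $\tfrac{1}{g_p}=1-p\psi$, so $g_p$ is uniquely determined by $f$ and $p$. Geometric infinite divisibility therefore says exactly that $(1-p\psi)^{-1}$ is a characteristic function for every $p\in(0,1)$. The claim becomes: $(1-p\psi)^{-1}$ is a characteristic function for all $p\in(0,1)$ if and only if $e^{\psi}$ is an infinitely divisible characteristic function.

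\emph{($\Leftarrow$).} Suppose $e^{\psi}$ is infinitely divisible. Then $e^{t\psi}$ is a characteristic function for every $t>0$. We use the subordination identity
\begin{align*}
    \int_0^\infty e^{-s}e^{sp\psi(\xi)}\,ds=\frac{1}{1-p\psi(\xi)}.
\end{align*}
This integral converges because $|e^{sp\psi}|\le 1$, i.e.\ $\operatorname{Re}\psi\le 0$. The right-hand side is a mixture of characteristic functions against the $\mathrm{Exp}(1)$ law, hence a characteristic function. So $g_p$ exists, and $f$ is geometrically infinitely divisible.

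\emph{($\Rightarrow$).} Suppose $g_p=(1-p\psi)^{-1}$ is a characteristic function for every $p$. Set $n=1/p$, so $p\to0$ as $n\to\infty$. Consider the geometric compound
\begin{align*}
    h_n\coloneqq \exp\bigl(n(g_{1/n}-1)\bigr).
\end{align*}
This is a compound Poisson characteristic function, hence infinitely divisible. Now
\begin{align*}
    n(g_{1/n}-1)=n\cdot\frac{\psi/n}{1-\psi/n}\longrightarrow \psi \qquad\text{pointwise}.
\end{align*}
Hence $h_n\to e^{\psi}$ pointwise. We also need $f$ never to vanish, so that $\psi$ is finite. This holds because $g_p$ is bounded by $1$: from $1/g_p=1+p(1/f-1)$, if $f(\xi_0)=0$ we can approach $\xi_0$ and get $|1/g_p|\to\infty$, which gives no contradiction directly. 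I would instead argue via $f=\lim_{p\to1}g_p$ and the boundedness $|1-p\psi|\ge1$, which forces $|\psi|<\infty$ on the set where $f\neq0$. Then by continuity of $f$ and connectedness, $f$ cannot reach $0$.

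The limit $e^{\psi}$ is continuous at $0$ with value $1$. By L\'evy's continuity theorem it is a characteristic function, and limits of infinitely divisible laws are infinitely divisible. This completes the converse.

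The main obstacle is the technical point just raised: $f$ has no zeros, and $\operatorname{Re}\psi\le0$ in the forward direction. The inequality $|g_p|\le1$ gives $|1-p\psi|\ge1$ for all small $p$. Letting $p\to0$ yields $\operatorname{Re}\psi\le0$ wherever $\psi$ is finite. Near a hypothetical zero of $f$, $|\psi|\to\infty$. I would rule this out by working on the maximal interval around $0$ where $f\ne0$. On that interval $h_n\to e^{\psi}$, and $|e^{\psi}|$ is bounded by $1$. However, a limit of characteristic functions on an interval whose modulus tends to $0$ at the endpoint contradicts infinite divisibility, since infinitely divisible characteristic functions never vanish. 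Making this rigorous is where I would spend the effort. Alternatively, I would cite the standard fact from \cite{MR773445} at this step.
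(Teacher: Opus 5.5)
The paper gives no proof of this theorem; it is quoted from Klebanov--Maniya--Melamed and Steutel. Your route is the standard one. For ($\Leftarrow$) you use the exponential mixture $\int_0^\infty e^{-s}e^{sp\psi}\,ds=(1-p\psi)^{-1}$ of the powers $e^{sp\psi}$. This is legitimate because $\psi=1-1/f$ is continuous with $\psi(0)=0$, so it is the distinguished logarithm of $e^{\psi}$. For ($\Rightarrow$) you use the compound Poisson approximation $\exp\bigl(n(g_{1/n}-1)\bigr)\to e^{\psi}$. The ($\Leftarrow$) direction is complete as written.

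The gap is the one you flag in ($\Rightarrow$): you never show that $f$ has no zeros, and neither of your sketches does it.
\begin{itemize}
\item ``Continuity and connectedness'' gives nothing by itself.
\item On the maximal zero-free interval around $0$, nothing you have proved forces $|e^{\psi}|\to 0$ at an endpoint. Indeed, $|\psi|\to\infty$ together with $\Re\psi\le 0$ still allows $\Re\psi$ to stay bounded.
\item L\'evy's continuity theorem needs pointwise convergence on all of $\mathbb{R}$, not on an interval.
\end{itemize}

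The fix is short within your own framework. If $f(\xi_0)=0$, then $g_p(\xi_0)=f(\xi_0)/\bigl(p+(1-p)f(\xi_0)\bigr)=0$ for every $p$, so $h_n(\xi_0)=e^{-n}\to 0$. Hence $h_n$ converges pointwise on all of $\mathbb{R}$: to $e^{\psi}$ off the zero set of $f$, and to $0$ on it. The limit equals $e^{\psi}$ near $0$, so it is continuous at $0$. By L\'evy's theorem it is a characteristic function, and it is infinitely divisible as a weak limit of infinitely divisible laws. Infinitely divisible characteristic functions never vanish, so the zero set of $f$ is empty.

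An alternative fix uses the identity $p+(1-p)f=p/\bigl(1-(1-p)g_p\bigr)$, which follows directly from the defining relation. The right-hand side is the characteristic function of a sum of i.i.d.\ $g_p$-variables over a geometric number of terms on $\{0,1,2,\dots\}$, which is compound Poisson. Letting $p\downarrow 0$, $p+(1-p)f\to f$ uniformly, so $f$ is itself infinitely divisible and hence zero-free.

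With either argument inserted before your limit step, the proof is complete.
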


We may now state a positive result for a refined version of Problem~\ref{problem-kac}.

\begin{theorem}\label{theorem:main}
    Let $f_1$ and $f_2$ be geometrically infinitely divisible characteristic functions on $\R$ satisfying \eqref{eq:defining-equation}. Assume that $f_1$ is either real-valued or even. Then $f_1$ and $f_2$ are of the form \eqref{eq:laplace}.
\end{theorem}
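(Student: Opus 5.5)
The plan is to pass through Theorem~\ref{theorem:characterization} and turn the nonlinear relation \eqref{eq:defining-equation} into an additive relation between L\'evy exponents. Set $\psi_j(\xi)\coloneqq 1-1/f_j(\xi)$. By Theorem~\ref{theorem:characterization}, $e^{\psi_j}$ is an infinitely divisible characteristic function for $j=1,2$. Moreover $\psi_j$ is continuous and vanishes at $0$, so it is the (unique continuous) L\'evy exponent of $e^{\psi_j}$. Hence each $\psi_j$ has a L\'evy--Khintchine representation with triplet $(b_j,s_j^2,\nu_j)$. Rewriting \eqref{eq:defining-equation} in terms of the $\psi_j$ gives
\begin{align*}
    \psi_1(\xi)+\psi_2(\xi)=-\xi^2 ,
\end{align*}
that is,
\begin{align*}
    e^{\psi_1(\xi)}\,e^{\psi_2(\xi)}=e^{-\xi^2}.
\end{align*}
This is now a genuinely multiplicative factorization of a Gaussian characteristic function into infinitely divisible factors, which is exactly the setting of the Cram\'er--L\'evy theorem.

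The next step is to conclude that each factor is Gaussian. One route is to apply Cram\'er--L\'evy directly: $e^{\psi_j}$ is a Gaussian characteristic function, so $\psi_j(\xi)=ib_j\xi-s_j^2\xi^2$. A more self-contained route uses uniqueness of the L\'evy--Khintchine triplet. The sum $\psi_1+\psi_2$ has triplet $(b_1+b_2,\,s_1^2+s_2^2,\,\nu_1+\nu_2)$, while $-\xi^2$ has triplet $(0,2,0)$. Therefore $\nu_1+\nu_2=0$, and since both measures are nonnegative, $\nu_1=\nu_2=0$. We also get $b_1+b_2=0$ and $s_1^2+s_2^2=2$. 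Either way,
\begin{align*}
    f_j(\xi)=\frac{1}{1-ib_j\xi+s_j^2\xi^2},\qquad b_2=-b_1 .
\end{align*}

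Finally, we use the hypothesis on $f_1$ to kill the drift. If $f_1$ is even, then $1/f_1$ is even, which forces $b_1=0$. If $f_1$ is real-valued, then $1/f_1$ is real, so its imaginary part $-b_1\xi$ vanishes for all $\xi$, which again forces $b_1=0$. In both cases $b_2=-b_1=0$. Setting $a_j\coloneqq s_j^2$, we get $a_1+a_2=1$ (from $s_1^2+s_2^2=2$, with the factor convention matching $2+\xi^2$, i.e.\ $1/f_j=1+a_j\xi^2$).

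It remains to check $a_j>0$. If $a_j=0$, then $f_j\equiv 1$ and $f_{3-j}(\xi)=1/(1+\xi^2)$, which is the degenerate endpoint of \eqref{eq:laplace}. This case is excluded only if one insists on strict positivity, and I would remark on it. This degenerate case (a point mass at $0$ paired with the full Laplace law) needs a comment, depending on whether $\delta_0$ counts as a degenerate Laplace law.

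The main obstacle is the careful justification that $\psi_j$ is exactly the L\'evy exponent, together with the uniqueness argument that rules out a jump part. In particular one must make sure that the continuous logarithm of $e^{\psi_j}$ coincides with $\psi_j$; this holds because both are continuous and vanish at $0$.
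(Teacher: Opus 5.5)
Your proposal is correct and is essentially the paper's proof. Both arguments use Theorem~\ref{theorem:characterization} to turn \eqref{eq:defining-equation} into an additive identity of L\'evy--Khintchine exponents, use uniqueness of the triplet to force $\nu_1=\nu_2=0$ and $b_1+b_2=0$, and then use real-valuedness or evenness of $f_1$ to kill the drift. Your check that $\psi_j$ is the continuous logarithm vanishing at $0$ is a point the paper passes over, as is your remark that the degenerate case $a_j=0$ (e.g.\ $f_1\equiv 1$, $f_2=(1+\xi^2)^{-1}$) satisfies the hypotheses but not $a_j>0$. One bookkeeping fix: since $-\xi^2$ has triplet $(0,2,0)$, you actually have $1/f_j=1-ib_j\xi+\tfrac12 s_j^2\xi^2$, so you should set $a_j=\tfrac12 s_j^2$ to get $a_1+a_2=1$.
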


\begin{proof}
    For $j\in\{1,2\}$, define
    \begin{align}\label{eq-def-of-psi}
        \psi_j(\xi)\coloneqq \frac{1}{f_j(\xi)}-1.
    \end{align}
    Since each $f_j$ is geometrically infinitely divisible, Theorem~\ref{theorem:characterization} implies that $e^{-\psi_j(\xi)}$ is an infinitely divisible characteristic function. Therefore, for each $j$, the function $-\psi_j$ is a L\'evy--Khintchine exponent, see \cite[Section 3.2]{MR4837608}. Hence there exist $\gamma_j\in\R$, $\sigma_j^2\ge 0$, and a L\'evy measure $\nu_j$ on $\R$ such that
    \begin{align}
        -\psi_j(\xi)
        =i\gamma_j\xi-\frac12\sigma_j^2\xi^2
        +\int_{\R}\Bigl(e^{i\xi x}-1-i\xi x\,\1_{\{|x|\le 1\}}\Bigr)\,\nu_j(dx).
    \end{align}
    Summing these identities and using \eqref{eq:defining-equation}, we obtain
    \begin{align}
        -\xi^2
        =i(\gamma_1+\gamma_2)\xi
        -\frac12(\sigma_1^2+\sigma_2^2)\xi^2
        +\int_{\R}\Bigl(e^{i\xi x}-1-i\xi x\,\1_{\{|x|\le 1\}}\Bigr)\,(\nu_1+\nu_2)(dx).
    \end{align}
    The left-hand side is the L\'evy--Khintchine exponent of the Gaussian characteristic function $e^{-\xi^2}$. Since $\nu_1+\nu_2$ is again a Lévy measure, the right-hand side is again a Lévy–Khintchine representation. By uniqueness of the L\'evy--Khintchine triplet, it follows that
    \begin{align}\label{eq:condition-for-levy-khintchine}
        \gamma_1+\gamma_2=0,
        \qquad
        \nu_1+\nu_2=0,
        \qquad
        \frac12(\sigma_1^2+\sigma_2^2)=1.
    \end{align}
    Since $\nu_1$ and $\nu_2$ are nonnegative measures, the identity $\nu_1+\nu_2=0$ implies that $\nu_1=\nu_2=0.$ Consequently, $\psi_j(\xi)=-i\gamma_j\xi+\frac12\sigma_j^2\xi^2.$ If $f_1$ is real-valued, then so is $\psi_1$, and hence $\gamma_1=0$. If $f_1$ is even, then $\psi_1$ is even, and again $\gamma_1=0$. In either case, \eqref{eq:condition-for-levy-khintchine} yields $\gamma_2=0$. Therefore $\psi_j(\xi)=\frac12\sigma_j^2\xi^2.$ Setting $a_j\coloneqq \frac12\sigma_j^2,$ and using \eqref{eq-def-of-psi}, we conclude that
    \begin{align}
        f_j(\xi)=\frac{1}{1+a_j\xi^2},
        \qquad a_1+a_2=1.
    \end{align}
    Thus $f_1$ and $f_2$ are of the form \eqref{eq:laplace}.
\end{proof}

\begin{remark}
    \begin{enumerate}
        \item A symmetry assumption such as real-valuedness or evenness is essential in Theorem~\ref{theorem:main}. If one assumes only that $f_1$ and $f_2$ are geometrically infinitely divisible and satisfy \eqref{eq:defining-equation}, then the above argument yields only
        \begin{align}
            f_j(\xi)=\frac{1}{1-i\gamma_j\xi+a_j\xi^2},
            \qquad \gamma_1+\gamma_2=0, \qquad a_1+a_2=1.
        \end{align}
        The first counterexample in Proposition~\ref{prop:first-counter-example} is exactly of this form.

        \item If one drops geometric infinite divisibility but retains real-valuedness and evenness, then Proposition~\ref{prop:first-counter-example-2} shows that the conclusion of Theorem~\ref{theorem:main} may still fail.
    \end{enumerate}
\end{remark}

\section{Some generalities}\label{sec:general}

In this section, we present a general framework that extends Theorem~\ref{theorem:main}. Let $\cC$ denote the class of continuous negative definite functions $\psi:\R\to[0,\infty)$ satisfying $\psi(0)=0$. Every $\psi\in\cC$ admits a unique L\'evy--Khintchine representation
\begin{align}\label{eq:symmetric-LK}
    \psi(\xi)
    =
    \frac12 \sigma^2 \xi^2
    +
    \int_{(0,\infty)} \bigl(1-\cos(t\xi)\bigr)\,\nu(dt),
\end{align}
where $\sigma\in\R$ and $\nu$ is a measure on $(0,\infty)$ satisfying $\int_{(0,\infty)} (1\wedge t^2)\,\nu(dt)<\infty.$ See \cite[Theorem 4.15]{MR2978140} or \cite[Section 3.1.3 and Theorem 3.2.7]{MR4837608}.

\begin{definition}\label{definition:indecomposable}
    We say that $\psi\in \cC\setminus\{0\}$ is \textit{indecomposable} if for every decomposition $\psi=\psi_1+\psi_2$ with $\psi_1,\psi_2\in\cC,$ there exists $a\in[0,1]$ such that $\psi_1=a\psi$ and $\psi_2=(1-a)\psi$.
\end{definition}

\begin{proposition}\label{prop:indecomposable-rays}
    The indecomposable elements of $\cC$ are precisely the positive multiples of $\xi^2$ and $1-\cos(t\xi)$ for some $t>0$.
\end{proposition}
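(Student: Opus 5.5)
The plan is to work entirely with the Lévy--Khintchine representation \eqref{eq:symmetric-LK}, using that the map $\psi\mapsto(\sigma^2,\nu)$ is a bijection from $\cC$ onto the cone of pairs $(\sigma^2,\nu)$ with $\sigma^2\ge0$ and $\nu$ a measure on $(0,\infty)$ with $\int(1\wedge t^2)\,\nu(dt)<\infty$. This bijection is additive and positively homogeneous; the coefficient is written $\sigma^2\ge0$ even though the statement says $\sigma\in\R$. Decompositions $\psi=\psi_1+\psi_2$ in $\cC$ then correspond, by uniqueness of the representation, exactly to decompositions $(\sigma^2,\nu)=(\sigma_1^2+\sigma_2^2,\nu_1+\nu_2)$ with both summands in the cone. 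So the question reduces to identifying the extreme rays of this cone of pairs.

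First, I show that the listed functions are indecomposable. If $\psi=c\xi^2$ with $c>0$ and $\psi=\psi_1+\psi_2$, uniqueness gives $\nu_1+\nu_2=0$, so both $\nu_j=0$ by positivity. Hence $\psi_j=\tfrac12\sigma_j^2\xi^2$, and we may take $a=\sigma_1^2/\sigma^2$. If instead $\psi=c(1-\cos(t\xi))$, its triplet is $(0,c\delta_t)$. Then $\sigma_1^2+\sigma_2^2=0$ forces $\sigma_j=0$, and $\nu_1+\nu_2=c\delta_t$ with $\nu_j\ge0$ forces each $\nu_j$ to be supported on $\{t\}$. So $\nu_j=c_j\delta_t$ with $c_1+c_2=c$, and we take $a=c_1/c$.

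Conversely, let $\psi\in\cC\setminus\{0\}$ be indecomposable with triplet $(\sigma^2,\nu)$. If both $\sigma^2>0$ and $\nu\ne0$, split $\psi=\tfrac12\sigma^2\xi^2+\int(1-\cos t\xi)\,\nu(dt)$. Both pieces lie in $\cC$, and neither is a scalar multiple of $\psi$, by uniqueness of triplets. This contradicts indecomposability, so either $\nu=0$, giving a positive multiple of $\xi^2$, or $\sigma=0$ and $\nu\ne0$.

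In the latter case I claim $\nu$ is a point mass. If not, the support of $\nu$ contains two distinct points, so there is a Borel set $B\subset(0,\infty)$ with $\nu(B)>0$ and $\nu(B^c)>0$. Concretely, take $B=(0,s]$ for a suitable $s$ between two support points. Write $\nu=\nu|_B+\nu|_{B^c}$. Both restrictions still satisfy the integrability condition, so $\psi_1,\psi_2\in\cC$. If $\psi_1=a\psi$, uniqueness would give $\nu|_B=a\nu$. Evaluating on $B^c$ gives $a=0$, while evaluating on $B$ gives $a=1$, which is a contradiction. Hence $\nu=c\delta_t$ with $c>0$ and $t>0$, so $\psi=c(1-\cos(t\xi))$.

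There is no real obstacle. The only points needing care are that uniqueness of the representation is applied correctly to the summands, and that nonnegativity of the summands' measures is used to localize supports. It should also be checked that each summand is genuinely in $\cC$, meaning continuous, negative definite, nonnegative, and vanishing at $0$. This follows because any pair in the cone defines an element of $\cC$ via \eqref{eq:symmetric-LK}.
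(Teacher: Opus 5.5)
Your proposal is correct and follows essentially the same route as the paper. Uniqueness of the L\'evy--Khintchine pair $(\sigma^2,\nu)$ gives indecomposability of $c\,\xi^2$ and $c(1-\cos(t\xi))$, while splitting off the Gaussian part, or restricting $\nu$ to $B=(0,s]$ and its complement, rules out everything else; your explicit choice of $s$ between two support points and your check that the summands are not multiples of $\psi$ spell out steps the paper only asserts.
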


\begin{proof}
    Let $\psi\in \cC\setminus\{0\}$ and write
    \begin{align}
        \psi(\xi)=\frac12\sigma^2\xi^2+\int_{(0,\infty)}(1-\cos(t\xi))\,\nu(dt)
    \end{align}
    as in \eqref{eq:symmetric-LK}. Suppose first that $\sigma^2>0$ and $\nu\neq 0$. Then $\psi$ is a nontrivial sum of two nonzero elements of $\cC$, so $\psi$ is not indecomposable. Next, suppose that $\sigma^2=0$ but $\nu$ is not a positive multiple of a Dirac mass. Then there exists a Borel set $A\subset(0,\infty)$ such that both $\nu|_A$ and $\nu|_{A^\complement}$ are nonzero. Hence
    \begin{align}
        \psi(\xi)
        =
        \int_A (1-\cos(t\xi))\,\nu(dt)
        +
        \int_{A^\complement} (1-\cos(t\xi))\,\nu(dt)
    \end{align}
    is a nontrivial decomposition inside $\cC$, so again $\psi$ is not indecomposable. Therefore, if $\psi$ is indecomposable, then either $\nu=0$, in which case $\psi(\xi)=\frac12\sigma^2\xi^2,$ or else $\sigma^2=0$ and $\nu=c\delta_t$ for some $c>0$ and $t>0$, in which case $\psi(\xi)=c(1-\cos(t\xi)).$

    Conversely, we show that these functions are indecomposable. First suppose
    \begin{align}\label{eq:levy-sum}
        c\,\xi^2=\psi_1+\psi_2,
        \qquad
        \psi_1,\psi_2\in\cC,
    \end{align}
    with $c>0$. Writing
    \begin{align}
        \psi_j(\xi)
        =
        \frac12\sigma_j^2\xi^2+\int_{(0,\infty)}(1-\cos(t\xi))\,\nu_j(dt),
        \qquad j=1,2,
    \end{align}
    and using the uniqueness of the L\'evy--Khintchine representation together with \eqref{eq:levy-sum}, we obtain $\nu_1+\nu_2=0$ and $\frac12(\sigma_1^2+\sigma_2^2)=c.$ Since $\nu_1,\nu_2\ge 0$, it follows that $\nu_1=\nu_2=0$. Hence each $\psi_j$ is a nonnegative scalar multiple of $\xi^2$, so $c\,\xi^2$ is indecomposable.

    Now suppose
    \begin{align}
        c(1-\cos(t\xi))=\psi_1+\psi_2,
        \qquad
        \psi_1,\psi_2\in\cC,
    \end{align}
    with $c>0$ and $t>0$. Comparing the L\'evy--Khintchine representations of both sides, we see that $\sigma_1=\sigma_2=0$ and $\nu_1+\nu_2=c\delta_t.$ Since $\nu_1,\nu_2$ are positive measures, each $\nu_j$ must be a nonnegative scalar multiple of $\delta_t$. Hence each $\psi_j$ is a nonnegative scalar multiple of $1-\cos(t\xi)$, so $c(1-\cos(t\xi))$ is indecomposable.
\end{proof}

\begin{lemma}\label{lem:cm-kernel}
    Let $L:[0,\infty)\to(0,1]$ be injective and completely monotone, and assume that $L(0)=1$. Let $\psi\in\cC$ and $a\ge 0$. Then $f_a^\psi(\xi)\coloneqq L(a\psi(\xi))$ is the characteristic function of a probability distribution on $\R$.
\end{lemma}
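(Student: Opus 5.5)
The plan is to use Bernstein's theorem to write $L$ as a mixture of exponentials, and then to see $f_a^\psi$ as a mixture of infinitely divisible characteristic functions. Since $L$ is completely monotone on $[0,\infty)$ with $L(0)=1$, Bernstein's theorem gives a probability measure $\mu$ on $[0,\infty)$ such that
\begin{align*}
    L(s)=\int_{[0,\infty)} e^{-sr}\,\mu(dr), \qquad s\ge 0.
\end{align*}
The total mass is $\mu([0,\infty))=L(0)=1$. Substituting $s=a\psi(\xi)\ge 0$ gives
\begin{align*}
    f_a^\psi(\xi)=\int_{[0,\infty)} e^{-ar\psi(\xi)}\,\mu(dr).
\end{align*}
Injectivity of $L$ is not actually needed for this step; it is presumably used later in the paper.

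Next I use the Lévy--Khintchine representation \eqref{eq:symmetric-LK}. For each $\psi\in\cC$ and each $\lambda\ge0$, the function $\lambda\psi$ is again in $\cC$, so $e^{-\lambda\psi(\xi)}$ is the characteristic function of a symmetric infinitely divisible law $\eta_\lambda$. This law is the convolution of a centered Gaussian with variance $\lambda\sigma^2$ and a symmetric compound-Poisson-type law with Lévy measure $\lambda\nu$ (symmetrized). Equivalently, by Schoenberg's theorem, $e^{-\lambda\psi}$ is continuous, positive definite and equals $1$ at $0$, and Bochner's theorem then yields $\eta_\lambda$. For $\lambda=0$ we simply get $\eta_0=\delta_0$.

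Finally I define the mixture $\rho\coloneqq\int \eta_{ar}\,\mu(dr)$. This is a probability measure provided $r\mapsto\eta_{ar}(B)$ is measurable for each Borel set $B$. Measurability holds because $r\mapsto e^{-ar\psi(\xi)}$ is continuous, so $r\mapsto\eta_{ar}$ is weakly continuous by Lévy's continuity theorem. Fubini then gives $\hat\rho(\xi)=\int e^{-ar\psi(\xi)}\,\mu(dr)=f_a^\psi(\xi)$.

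The main obstacle is only technical: justifying the measurability of the mixture. An alternative that avoids it is to check directly that $f_a^\psi$ is continuous, positive definite and satisfies $f_a^\psi(0)=1$, and then invoke Bochner. Continuity follows from dominated convergence, since the integrand is bounded by $1$. Positive definiteness holds because an integral of positive definite functions against a positive measure is positive definite. And $f_a^\psi(0)=L(0)=1$ because $\psi(0)=0$.
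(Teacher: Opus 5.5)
Your proof is correct and follows the paper's argument. Both proofs write $L$ via Bernstein--Widder as a mixture of exponentials, observe that each $e^{-ta\psi}$ is a characteristic function (by Schoenberg's theorem, or equivalently the L\'evy--Khintchine representation), and conclude that the mixture is a characteristic function equal to $L(0)=1$ at the origin. Your measurability remark, or alternatively the direct Bochner check, just makes explicit the step the paper summarizes as ``a convex combination of characteristic functions is a characteristic function.'' You are also right that injectivity of $L$ plays no role in this lemma.
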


\begin{proof}
    By the Bernstein--Widder theorem, see \cite[Chapter IV, Theorem 12a]{MR5923}, there exists a probability measure $\rho$ on $[0,\infty)$ such that
    \begin{align}
        L(s)=\int_{[0,\infty)} e^{-st}\,\rho(dt),\qquad 
        f_a^\psi(\xi)=\int_{[0,\infty)} e^{-ta\psi(\xi)}\,\rho(dt).
    \end{align}
    Since $\psi\in\cC$, Schoenberg's theorem implies that $e^{-ta\psi(\xi)}$ is a characteristic function for every $t\ge 0$, see \cite{MR1503439} or \cite[Theorem 7.8]{MR481057}. Therefore $f_a^\psi$ is a convex combination of characteristic functions, and hence is again a characteristic function. Since also $f_a^\psi(0)=L(0)=1$, it is the characteristic function of a probability distribution.
\end{proof}

For such an $L$, set $I_L\coloneqq L([0,\infty))\subset(0,1]$
and define $\Phi_L:I_L\times I_L\to I_L$ by
\begin{align}\label{eq:definition-phi-l}
    \Phi_L(x,y)\coloneqq L\bigl(L^{-1}(x)+L^{-1}(y)\bigr).
\end{align}

\begin{remark}\label{remark-probabilisitc-interpretation}
    Suppose $\eta_1,\eta_2\in \cC$ and $f_j(\xi)=L(\eta_j(\xi)),$ $j=1,2,$ where $L$ is as in Lemma \ref{lem:cm-kernel}. Write
    \begin{align}
        L(s)=\int_{[0,\infty)} e^{-st}\,\rho(dt)
    \end{align}
    for the Bernstein--Widder representation, and let $T$ be a random variable with law $\rho$. If $(Y_j(t))_{t\ge0}$ are independent L\'evy processes with $\bE [e^{i\xi Y_j(t)}]=e^{-t\eta_j(\xi)},$ then
    \begin{align}
        \bE [e^{i\xi Y_j(T)}]=\int_{0}^\infty e^{-t\eta_j(\xi)}\,\rho(dt)=L(\eta_j(\xi))=f_j(\xi).
    \end{align}
    Using the same random time $T$ for both processes, we get
    \begin{align}
        \bE [e^{i\xi (Y_1(T)+Y_2(T))}]=L(\eta_1(\xi)+\eta_2(\xi)) =\Phi_L(f_1(\xi),f_2(\xi)).
    \end{align}
    Thus, on the image of $L$, the operation $\Phi_L$ corresponds to adding independent L\'evy
    processes at a common independent random time. In the special case $L(s)=(1+s)^{-1}$, the law
    of $T$ is $\mathrm{Exp}(1)$, recovering Kac's operation $\Phi$.
\end{remark}

\begin{theorem}\label{thm:binary-characterization}
    Let $L$ be as in Lemma~\ref{lem:cm-kernel}, and let $\psi\in\cC$ be indecomposable in the sense of Definition~\ref{definition:indecomposable}. Suppose that $\eta_1,\eta_2\in\cC$, and set $f_j(\xi)\coloneqq L(\eta_j(\xi)),$ $j=1,2.$
    If
    \begin{align}\label{eq:general-phi-L}
        \Phi_L\bigl(f_1(\xi),f_2(\xi)\bigr)=L(\psi(\xi))
        \qquad\text{for all }\xi\in\R,
    \end{align}
    then there exist $a_1,a_2\ge 0$ with $a_1+a_2=1$ such that
    \begin{align}\label{eq:general-conclusion}
        f_j(\xi)=L(a_j\psi(\xi)),
        \qquad j=1,2.
    \end{align}
\end{theorem}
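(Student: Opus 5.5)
The plan is to strip away $L$ using its injectivity, which turns \eqref{eq:general-phi-L} into an additive identity in $\cC$, and then apply Definition~\ref{definition:indecomposable}.

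\emph{Step 1: reduce to $\eta_1+\eta_2=\psi$.} Since $\eta_j(\xi)\in[0,\infty)$, we have $f_j(\xi)=L(\eta_j(\xi))\in I_L$, and injectivity of $L$ gives $L^{-1}(f_j(\xi))=\eta_j(\xi)$. By the definition \eqref{eq:definition-phi-l} of $\Phi_L$, the hypothesis \eqref{eq:general-phi-L} then reads
\begin{align*}
L\bigl(\eta_1(\xi)+\eta_2(\xi)\bigr)=L(\psi(\xi)) \qquad (\xi\in\R).
\end{align*}
Both arguments lie in $[0,\infty)$, so injectivity of $L$ again yields $\eta_1+\eta_2=\psi$ pointwise on $\R$.

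\emph{Step 2: use indecomposability.} By hypothesis $\eta_1,\eta_2\in\cC$, so $\psi=\eta_1+\eta_2$ is a decomposition of $\psi$ inside $\cC$. Since $\psi$ is indecomposable, which by Definition~\ref{definition:indecomposable} includes $\psi\neq0$, there is $a\in[0,1]$ with $\eta_1=a\psi$ and $\eta_2=(1-a)\psi$. Setting $a_1=a$ and $a_2=1-a$ gives $a_1,a_2\ge0$, $a_1+a_2=1$, and $f_j=L(a_j\psi)$, which is \eqref{eq:general-conclusion}.

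\emph{Main obstacle.} There is no substantial one; the only care needed is in Step 1. We must check that the compositions $L^{-1}\circ f_j$ are well defined, that is, that $\eta_j$ takes values in the domain $[0,\infty)$ of $L$. This holds because elements of $\cC$ are nonnegative. We also need injectivity of $L$ itself, not merely of $\Phi_L$, to cancel $L$ on both sides. Everything else is immediate from the definition of indecomposability, so Theorem~\ref{theorem:main} appears as the special case $L(s)=(1+s)^{-1}$ and $\psi(\xi)=\xi^2$.
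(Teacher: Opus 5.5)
Your proof is correct and follows the paper's argument essentially verbatim: cancel $L$ by injectivity to obtain $\eta_1+\eta_2=\psi$ pointwise, then apply the indecomposability of $\psi$ to get $\eta_1=a\psi$ and $\eta_2=(1-a)\psi$. One caveat about your closing remark: recovering Theorem~\ref{theorem:main} as a special case also requires Theorem~\ref{theorem:characterization} and the real-valued/even hypothesis, which are what place the $\psi_j$ in $\cC$, as the paper notes.
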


\begin{proof}
    By the definition of $\Phi_L$, equation \eqref{eq:general-phi-L} becomes
    \begin{align}
        L\bigl(L^{-1}(f_1(\xi))+L^{-1}(f_2(\xi))\bigr)=L(\psi(\xi)).
    \end{align}
    Since $L$ is injective on $[0,\infty)$, it follows that
    \begin{align}
        L^{-1}(f_1(\xi))+L^{-1}(f_2(\xi))=\psi(\xi).
    \end{align}
    Because $f_j(\xi)=L(\eta_j(\xi))$, we obtain
    \begin{align}
        \eta_1(\xi)+\eta_2(\xi)=\psi(\xi)
        \qquad\text{for all }\xi\in\R.
    \end{align}
    Since $\eta_1,\eta_2\in\cC$ and $\psi$ is indecomposable, there exists $a\in[0,1]$ such that
    \begin{align}
        \eta_1=a\psi,
        \qquad
        \eta_2=(1-a)\psi.
    \end{align}
    Setting $a_1=a$ and $a_2=1-a$ proves \eqref{eq:general-conclusion}.
\end{proof}

We now record several special cases of Theorem~\ref{thm:binary-characterization}.

\begin{example}
    Take $L(s)=e^{-s},$ so that $\Phi_L(x,y)=xy.$
    \begin{enumerate}
        \item If $\psi(\xi)=\xi^2$, then $L(\psi(\xi))=e^{-\xi^2},$ which is the characteristic function of a centered Gaussian law. Thus multiplication characterizes the Gaussian family inside $\{e^{-\eta(\xi)}:\eta\in\cC\}.$ This is a special case of Cramér--Lévy theorem, see \cite[Theorem 2.2.1]{MR4573029}.

        \item If $\psi_t(\xi)=a(1-\cos(t\xi))$ with $a>0$ and $t>0$, then $L(\psi_t(\xi))=\exp\bigl(a(\cos(t\xi)-1)\bigr),$ which is the characteristic function of the symmetric compound Poisson law with intensity $a$ and jumps $\pm t$ with probabilities $1/2$. Thus multiplication characterizes this symmetric lattice family inside $\{e^{-\eta(\xi)}:\eta\in\cC\}.$ 
    \end{enumerate}
\end{example}

\begin{example}
    Fix $\alpha>0$ and $0<\beta\le 1$, and define
    \begin{align}
        L_{\alpha,\beta}(s)=(1+s^\beta)^{-\alpha},
        \qquad
        \Phi_{L_{\alpha,\beta}}(x,y)
        =
        \left(
            1+
            \left(
                (x^{-1/\alpha}-1)^{1/\beta}
                +
                (y^{-1/\alpha}-1)^{1/\beta}
            \right)^\beta
        \right)^{-\alpha}.
    \end{align}
    \begin{enumerate}
        \item If $\psi(\xi)=\xi^2$, then $L_{\alpha,\beta}(\psi(\xi))=(1+|\xi|^{2\beta})^{-\alpha},$ which is the characteristic function of the generalized Linnik law. Hence Theorem~\ref{thm:binary-characterization} shows that $\Phi_{L_{\alpha,\beta}}$ characterizes the generalized Linnik family inside $\{(1+\eta(\xi)^\beta)^{-\alpha}:\eta\in\cC\}.$ The case $(\alpha,\beta)=(1,1)$, combined with Theorem~\ref{theorem:characterization}, recovers Theorem~\ref{theorem:main}. Indeed, by Theorem~\ref{theorem:characterization}, a characteristic function $f$ is geometrically infinitely divisible if and only if there exists a function $\psi$ such that $-\psi$ is a L\'evy--Khintchine exponent and $f(\xi)=L_{1,1}(\psi(\xi))=(1+\psi(\xi))^{-1}.$ If, in addition, $f$ is real-valued or even, then $\psi$ is real-valued and continuous negative definite, hence $\psi\in\cC$.

        \item If $\psi_t(\xi)=2r(1-r)^{-2}(1-\cos(t\xi)),$ where $ t>0,$ $r\in[0,1),$
        then $L_{1,1}(\psi_t(\xi)) = (1-r)^2(1-2r\cos(t\xi)+r^2)^{-1},$
        which is the characteristic function of the two-sided geometric law on $t\Z$,
        \begin{align}\label{eq:two-sided}
            \bP(X=kt)=\frac{1-r}{1+r}r^{|k|},
            \qquad k\in\Z.
        \end{align}
        Thus $\Phi_{L_{1,1}}$ characterizes the law \eqref{eq:two-sided} inside $\{(1+\eta(\xi))^{-1}:\eta\in\cC\}.$
    \end{enumerate}
\end{example}

\begin{example}
    Fix $0<\beta\le 1$, and set
    \begin{align}
        L_\beta(s)=e^{-s^\beta},
        \qquad
        \Phi_{L_\beta}(x,y)
        =
        \exp\left(
            -\Bigl((-\log x)^{1/\beta}+(-\log y)^{1/\beta}\Bigr)^\beta
        \right).
    \end{align}
    If $\psi(\xi)=\xi^2$, then $L_\beta(\psi(\xi))=e^{-|\xi|^{2\beta}},$ which is the characteristic function of the centered symmetric \(2\beta\)-stable law. Therefore Theorem~\ref{thm:binary-characterization} shows that $\Phi_{L_\beta}$ characterizes the symmetric \(2\beta\)-stable family inside $\{e^{-\eta(\xi)^\beta}:\eta\in\cC\}.$ In particular, when $\beta=\frac12$, one recovers the Cauchy law.
\end{example}

\begin{remark}
    For additional examples of functions $L$ satisfying the hypotheses of Lemma~\ref{lem:cm-kernel}, we refer the reader to \cite{MR2978140}.
\end{remark}

\subsection{Generalized counterexamples}

We now generalize the counterexamples from Section~\ref{sec:counter-examples}. Fix $\beta>0$, and define
\begin{align}\label{eq:almost-gaussian}
    L_\beta(s)\coloneqq \left(1+\frac{s}{\beta}\right)^{-\beta},
    \qquad
    \Phi_\beta(x,y)\coloneqq \bigl(x^{-1/\beta}+y^{-1/\beta}-1\bigr)^{-\beta}.
\end{align}
In the complex-valued examples below, when $\beta$ is not an integer we interpret
$z\mapsto z^{-1/\beta}$ using the principal branch. This is legitimate because the relevant values remain in the slit plane $\C\setminus(-\infty,0]$. For $a\ge 0$, set $\psi_a(\xi)\coloneqq a\xi^2$ and define
\begin{align}\label{eq:general-counter-example}
    \cG_\beta
    \coloneqq
    \left\{
        L_\beta(\psi_a(\xi))
        =
        \left(1+\frac{a\xi^2}{\beta}\right)^{-\beta}
        \colon a>0
    \right\}.
\end{align}

The first family generalizes Proposition~\ref{prop:first-counter-example}.

\begin{proposition}\label{prop:first-counter-example-general}
    Fix $\beta>0$ and $a_1,a_2>0$. Assume that either $0<\beta\le 2$ and $b\neq 0$ is arbitrary, or that $\beta>2$ and
    \begin{align}\label{eq:branch-condition}
        0<|b|<2\sqrt{\beta}\tan\!\left(\frac{\pi}{\beta}\right)\min\{\sqrt{a_1},\sqrt{a_2}\}.
    \end{align}
    Then the functions
    \begin{align}
        f_1(\xi)\coloneqq \left(1+\frac{a_1\xi^2-ib\xi}{\beta}\right)^{-\beta},
        \qquad
        f_2(\xi)\coloneqq \left(1+\frac{a_2\xi^2+ib\xi}{\beta}\right)^{-\beta}
    \end{align}
    are characteristic functions satisfying
    \begin{align}\label{eq:phi-beta}
        \Phi_\beta\bigl(f_1(\xi),f_2(\xi)\bigr)
        =
        \left(1+\frac{(a_1+a_2)\xi^2}{\beta}\right)^{-\beta}
        \in \cG_\beta,
    \end{align}
    but neither $f_1$ nor $f_2$ belongs to $\cG_\beta$.
\end{proposition}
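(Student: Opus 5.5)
The plan is to realize each $f_j$ as the characteristic function of a Gaussian variable with drift evaluated at an independent Gamma time, in the spirit of Remark~\ref{remark-probabilisitc-interpretation}. Let $T\sim\mathrm{Gamma}(\beta,\beta)$, with density $\beta^\beta t^{\beta-1}e^{-\beta t}/\Gamma(\beta)$ on $(0,\infty)$. Its Laplace transform is $\bE[e^{-sT}]=(1+s/\beta)^{-\beta}=L_\beta(s)$ for $\operatorname{Re}s>-\beta$. Let $B$ be a standard Brownian motion independent of $T$. For $j=1,2$ set $Y_1(t)\coloneqq bt+\sqrt{2a_1}B_t$ and $Y_2(t)\coloneqq -bt+\sqrt{2a_2}B'_t$. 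These have exponents $\eta_1(\xi)=a_1\xi^2-ib\xi$ and $\eta_2(\xi)=a_2\xi^2+ib\xi$, both with nonnegative real part. Conditioning on $T$ gives $\bE[e^{i\xi Y_j(T)}]=\bE[e^{-T\eta_j(\xi)}]$. Since $\operatorname{Re}\eta_j\ge0$, the Laplace-transform identity extends by analytic continuation to the closed right half-plane. Here $(1+s/\beta)^{-\beta}$ is understood with the principal branch, which is analytic on $\operatorname{Re}s>-\beta$, so $\bE[e^{i\xi Y_j(T)}]=f_j(\xi)$ and each $f_j$ is a characteristic function. This step works for every $\beta>0$ and every real $b$.

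Next I verify \eqref{eq:phi-beta}. We need $f_j(\xi)^{-1/\beta}=1+\eta_j(\xi)/\beta$ with $z^{-1/\beta}$ on the principal branch. Write $w_j\coloneqq1+\eta_j(\xi)/\beta$, which lies in the right half-plane, so $\arg w_j\in(-\pi/2,\pi/2)$. By definition $f_j=\exp(-\beta\operatorname{Log}w_j)$, so $\arg f_j=-\beta\arg w_j$. Then $f_j^{-1/\beta}=w_j$ on the principal branch exactly when $\beta|\arg w_j|<\pi$, that is, when $f_j$ stays in the slit plane and the exponent does not wrap.

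When $\beta\le2$ this holds automatically, since $|\arg w_j|<\pi/2$. When $\beta>2$ we need $|\arg w_j|<\pi/\beta$ for all $\xi$. Now $\tan\arg w_1=-b\xi/(\beta+a_1\xi^2)$. Its supremum over $\xi$ is $|b|/(2\sqrt{\beta a_1})$, attained at $\xi^2=\beta/a_1$, and the same holds for $w_2$ with $a_2$. So the requirement becomes $|b|<2\sqrt{\beta a_j}\tan(\pi/\beta)$ for $j=1,2$, which is exactly \eqref{eq:branch-condition}. Given this, $f_1^{-1/\beta}+f_2^{-1/\beta}-1=1+(a_1+a_2)\xi^2/\beta$. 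This is positive, so applying $z\mapsto z^{-\beta}$ yields \eqref{eq:phi-beta}.

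Finally, neither $f_j$ lies in $\cG_\beta$: every element of $\cG_\beta$ is real-valued and even, whereas $f_1$ is not real for small $\xi\ne0$. Indeed $\operatorname{Im}f_1(\xi)\sim b\xi\neq0$, and the same argument applies to $f_2$. The main obstacle is the branch bookkeeping in the second step: justifying that principal powers compose correctly requires the argument bound, and computing that bound sharply is what produces condition \eqref{eq:branch-condition}.
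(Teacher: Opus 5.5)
Your proof is correct and follows the paper's argument essentially step for step. The paper also realizes $f_j$ as the law of $\pm bG+\sqrt{2a_jG}\,Z_j$ with $G$ Gamma-distributed, which is the same as your drifted Brownian motion run to an independent Gamma time. It then uses the same bound $|\arg w_j|\le\arctan\bigl(|b|/(2\sqrt{a_j\beta})\bigr)<\pi/\beta$ to justify the principal-branch identity $f_j^{-1/\beta}=w_j$, and it concludes non-membership because $f_j$ is not real-valued. Your extra details, namely the analytic continuation of the Gamma Laplace transform and the asymptotic $\operatorname{Im}f_1(\xi)\sim b\xi$, make explicit what the paper leaves implicit.
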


\begin{proof}
    Let $G$ be a Gamma random variable with shape parameter $\beta$ and scale parameter $1/\beta$. Then
    \begin{align}
        \bE(e^{-zG})
        =
        \left(1+\frac{z}{\beta}\right)^{-\beta},
        \qquad \Re z\ge 0,
    \end{align}
    where the principal branch is understood. Let $Z_1\sim N(0,1)$ be independent of $G$, and define $ X\coloneqq bG+\sqrt{2a_1G}\,Z_1.$ Then
    \begin{align}
        \bE(e^{i\xi X}\mid G) = e^{ibG\xi}\,\bE\bigl(e^{i\xi\sqrt{2a_1G}Z_1}\mid G\bigr) =
        e^{ibG\xi-a_1G\xi^2} = e^{-G(a_1\xi^2-ib\xi)}.
    \end{align}
    Taking expectations gives
    \begin{align}
        \bE(e^{i\xi X})
        =
        \bE\left[e^{-G(a_1\xi^2-ib\xi)}\right]
        =
        f_1(\xi),
    \end{align}
    so $f_1$ is a characteristic function. Similarly, if $Z_2\sim N(0,1)$ is independent of $G$ and $Y\coloneqq -bG+\sqrt{2a_2G}\,Z_2,$ then $\bE(e^{i\xi Y})=f_2(\xi),$ and hence $f_2$ is also a characteristic function. 
    
    Set
    \begin{align}
        w_1(\xi)\coloneqq 1+\frac{a_1\xi^2-ib\xi}{\beta},
        \qquad
        w_2(\xi)\coloneqq 1+\frac{a_2\xi^2+ib\xi}{\beta}.
    \end{align}
    Then $\Re w_j(\xi)=1+\frac{a_j\xi^2}{\beta}>0$, so $w_j(\xi)$ lies in the open right half-plane. If $0<\beta\le 2$, then $|\arg w_j(\xi)|<\pi/2\le \pi/\beta$, and hence the principal branch gives $(w_j(\xi)^{-\beta})^{-1/\beta}=w_j(\xi).$ Assume now that $\beta>2$. Since
    \begin{align}
        |\arg w_j(\xi)|
        =
        \arctan\!\left(\frac{|b||\xi|}{\beta+a_j\xi^2}\right),
    \end{align}
    and the quantity $\frac{|b||\xi|}{\beta+a_j\xi^2}$ is maximized at $|\xi|=\sqrt{\beta/a_j}$ with maximum value $\frac{|b|}{2\sqrt{a_j\beta}}$, condition \eqref{eq:branch-condition} implies
    \begin{align}
        |\arg w_j(\xi)|
        \le
        \arctan\!\left(\frac{|b|}{2\sqrt{a_j\beta}}\right)
        <
        \frac{\pi}{\beta}.
    \end{align}
    Therefore again $(w_j(\xi)^{-\beta})^{-1/\beta}=w_j(\xi).$ Hence $f_1(\xi)^{-1/\beta}=w_1(\xi),$ $f_2(\xi)^{-1/\beta}=w_2(\xi),$ and so \eqref{eq:phi-beta} follows.
    
    Finally, every element of $\cG_\beta$ is real-valued and even, whereas $f_1$ and $f_2$ are not real-valued when $b\neq 0$. Therefore neither $f_1$ nor $f_2$ belongs to $\cG_\beta$.
\end{proof}

The next family generalizes Proposition~\ref{prop:first-counter-example-2}.

\begin{proposition}\label{prop:first-counter-example-2-general}
    For every choice of $a>0$, $n\in\N$, and $\theta\in(0,1/2)$, the functions
    \begin{align}
        f_1(\xi)\coloneqq \left(\frac{1}{2}+\frac{1}{2\left(1+\frac{\theta a}{n}\xi^2\right)}\right)^n,
        \qquad
        f_2(\xi)\coloneqq \left(\frac{2+\frac{\theta a}{n}\xi^2}{2+2\frac{a}{n}\xi^2+\frac{\theta a^2}{n^2}\xi^4}\right)^n
    \end{align}
    are characteristic functions satisfying
    \begin{align}
        \Phi_n\bigl(f_1(\xi),f_2(\xi)\bigr)
        =
        \left(1+\frac{a\xi^2}{n}\right)^{-n}\in \cG_n,
    \end{align}
    but neither $f_1$ nor $f_2$ belongs to $\cG_n$.
\end{proposition}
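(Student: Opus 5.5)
The plan is to reduce everything to Proposition~\ref{prop:first-counter-example-2} by a rescaling and then take $n$-th powers. Write $g_1,g_2$ for the functions in \eqref{eq:counter-example-2} with parameter $a$ replaced by $\theta\in(0,1/2)$. Substituting $\xi\mapsto \xi\sqrt{a/n}$ gives
\begin{align}
    g_1\bigl(\xi\sqrt{a/n}\bigr)=\frac{2+\frac{\theta a}{n}\xi^2}{2\bigl(1+\frac{\theta a}{n}\xi^2\bigr)}=\frac12+\frac{1}{2\bigl(1+\frac{\theta a}{n}\xi^2\bigr)},
\end{align}
and similarly $g_2(\xi\sqrt{a/n})=\bigl(2+\frac{\theta a}{n}\xi^2\bigr)\big/\bigl(2+\frac{2a}{n}\xi^2+\frac{\theta a^2}{n^2}\xi^4\bigr)$. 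Hence $f_j(\xi)=g_j(\xi\sqrt{a/n})^n$.

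Since a rescaled characteristic function is a characteristic function (the law of $\sqrt{a/n}\,X$), and products of characteristic functions are characteristic functions (sums of independent copies), Proposition~\ref{prop:first-counter-example-2} shows that $f_1$ and $f_2$ are characteristic functions.

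For the identity, note that $g_1,g_2$ take values in $(0,1]$. The first is an average of $1$ and a number in $(0,1]$. For the second, positivity of numerator and denominator is clear, and $g_2\le 1$ follows from $1/g_2=2+\xi^2-1/g_1\ge 1$ when $1/g_1\le 1+\xi^2$, which is immediate. So the real positive $n$-th roots give $f_j^{-1/n}=1/g_j(\xi\sqrt{a/n})$, and no branch issues arise. By \eqref{eq:defining-equation} at the rescaled point,
\begin{align}
    f_1^{-1/n}+f_2^{-1/n}-1=1+\frac{a}{n}\xi^2,
\end{align}
so $\Phi_n(f_1,f_2)=(1+a\xi^2/n)^{-n}\in\cG_n$.

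It remains to show neither $f_j$ lies in $\cG_n$. If $f_j=(1+c\xi^2/n)^{-n}$ for some $c>0$, then taking positive $n$-th roots would give $g_j(\xi\sqrt{a/n})=(1+c\xi^2/n)^{-1}$, i.e.\ $g_j$ would be a Laplace characteristic function. This is false: $g_1(\xi)\to1/2\neq0$ as $\xi\to\infty$, and $g_2$ is a rational function whose denominator has degree $4$ with distinct roots $-\lambda_\pm$ in $\xi^2$, so it is not of the form $(1+c\xi^2)^{-1}$. Equivalently, comparing $f_1\to2^{-n}$ at infinity and the pole structure of $f_2$ suffices.

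The only step needing care is this positivity/branch check together with the non-membership argument; everything else is direct from Proposition~\ref{prop:first-counter-example-2}.
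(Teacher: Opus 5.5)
Your proof is correct and is the paper's argument: rescale the $\theta$-version of Proposition~\ref{prop:first-counter-example-2} by $\sqrt{a/n}$, take $n$-th powers, and use positivity of the base so that $f_j^{-1/n}$ is the reciprocal of the rescaled base. Your non-membership argument supplies detail the paper only calls standard; for $f_2$, add that the numerator $2+\theta\xi^2$ shares no factor with the denominator (its root $\xi^2=-2/\theta$ differs from $-\lambda_\pm$ since $\theta>0$), so both poles genuinely survive.
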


\begin{proof}
    By Proposition~\ref{prop:first-counter-example-2}, the functions
    \begin{align}
        \widetilde{u}_1(\xi)\coloneqq \frac{1}{2}+\frac{1}{2(1+\theta\xi^2)},
        \qquad
        \widetilde{u}_2(\xi)\coloneqq \frac{2+\theta\xi^2}{2+2\xi^2+\theta\xi^4}
    \end{align}
    are characteristic functions satisfying
    \begin{align}
        \Phi_1\bigl(\widetilde{u}_1(\xi),\widetilde{u}_2(\xi)\bigr)=(1+\xi^2)^{-1}.
    \end{align}
    Rescaling preserves the class of characteristic functions, so $ u_j(\xi)\coloneqq \widetilde{u}_j\!\left(\sqrt{\frac{a}{n}}\,\xi\right),$ $j=1,2,$ are characteristic functions, and
    \begin{align}
        \Phi_1\bigl(u_1(\xi),u_2(\xi)\bigr)
        =
        \left(1+\frac{a\xi^2}{n}\right)^{-1}.
    \end{align}
    Therefore $f_1(\xi)= u_1(\xi)^n$ and $f_2(\xi)= u_2(\xi)^n$ are also characteristic functions. Since $u_1,u_2>0$, we have $f_1(\xi)^{-1/n}=u_1(\xi)^{-1}$ and $f_2(\xi)^{-1/n}=u_2(\xi)^{-1}.$ Hence
    \begin{align*}
        \Phi_n\bigl(f_1(\xi),f_2(\xi)\bigr)
        &=
        \bigl(f_1(\xi)^{-1/n}+f_2(\xi)^{-1/n}-1\bigr)^{-n} =
        \bigl(u_1(\xi)^{-1}+u_2(\xi)^{-1}-1\bigr)^{-n} \\
        &=
        \left(1+\frac{a\xi^2}{n}\right)^{-n}
        \in \cG_n.
    \end{align*}

    It is standard to check that neither $f_1$ nor $f_2$ lies in $\cG_n$. 
\end{proof}

\subsection{The Gaussian limit of counterexamples and some open questions}

For $\beta>0$, define $L_\beta$ and $\Phi_\beta$ as in \eqref{eq:almost-gaussian}. Then $L_\beta(s)\to e^{-s}$ as $\beta\to\infty$ locally uniformly on $[0,\infty)$, and correspondingly $\Phi_\beta$ converges formally to multiplication $\Phi_\infty(x,y)=xy.$ Moreover, every element of $\cG_\beta$ in \eqref{eq:general-counter-example} converges locally uniformly on $\R$ to a Gaussian characteristic function of the form $e^{-a\xi^2}$. On the other hand, Cramér--Lévy theorem implies that if
\begin{align}
    \Phi_\infty(f_1(\xi),f_2(\xi))
    =
    f_1(\xi)f_2(\xi)
    =
    e^{-a\xi^2},
\end{align}
then both $f_1$ and $f_2$ are Gaussian characteristic functions. Thus both families of counterexamples must disappear in the Gaussian limit.

Consider an admissible family $b_\beta$ satisfying \eqref{eq:branch-condition}. Then necessarily $b_\beta\to 0$ as $\beta\to\infty$. Hence, for fixed $a_1,a_2>0$,
\begin{align}
    \left(1+\frac{a_1\xi^2-ib_\beta\xi}{\beta}\right)^{-\beta}
    \to e^{-a_1\xi^2},
    \qquad
    \left(1+\frac{a_2\xi^2+ib_\beta\xi}{\beta}\right)^{-\beta}
    \to e^{-a_2\xi^2}
\end{align}
locally uniformly on $\R$, and these limits are Gaussian characteristic functions.

The second family of counterexamples in Proposition~\ref{prop:first-counter-example-2-general} disappears as $n\to\infty$ as well. Each factor is of the form $f_{j,n}(\xi)=\widetilde u_j\!\left(\sqrt{\frac an}\,\xi\right)^n,$ and hence is the characteristic function of the $\sqrt{a/n}$-rescaled $n$-fold convolution power of the law corresponding to $\widetilde u_j$. Consequently, by a central limit type argument or by a logarithmic expansion,
\begin{align}
    f_{1,n}(\xi)\to e^{-\frac{\theta a}{2}\xi^2},
    \qquad
    f_{2,n}(\xi)\to e^{-\left(1-\frac{\theta}{2}\right)a\xi^2}
\end{align}
locally uniformly on $\R$.

The observation leads naturally to the following questions.

\begin{question}\label{question-1}
    For which injective completely monotone functions $L:[0,\infty)\to(0,1]$ does the following hold? Whenever characteristic functions $f_1,f_2$ with values in $I_L\coloneqq L([0,\infty))$ satisfy
    \begin{align}
        \Phi_L(f_1(\xi),f_2(\xi))=L(\psi(\xi))
    \end{align}
    for some indecomposable $\psi\in \cC$, must there exist $\eta_1,\eta_2\in \cC$ such that $f_j=L(\eta_j),$ $j=1,2?$
\end{question}

\begin{question}
    Is $L(s)=e^{-s}$ characterized among completely monotone functions by the following rigidity property: if
    \begin{align}
        \Phi_L(f_1(\xi),f_2(\xi))=L(a\xi^2)
    \end{align}
    for characteristic functions $f_1$ and $f_2$, then the factors must themselves lie in the image of $L$?
\end{question}

A positive answer to Question~\ref{question-1}, together with Theorem~\ref{thm:binary-characterization}, would force $\eta_1$ and $\eta_2$ to be positive multiples of $\psi$. It would be interesting to understand whether complex-analytic properties of extensions of $L$, or structural properties of the operation $\Phi_L$, play a role here.

More generally, we ask the following.

\begin{question}\label{question-main-3}
    Let $D\subset\C$ and let $\Phi:D\times D\to\C$ be a map such that for every pair of characteristic functions $f_1,f_2$ on $\R$, the quantity $\Phi(f_1(\xi),f_2(\xi))$ is well defined for every $\xi\in\R$. Let $\cM$ be a subfamily of the set of all characteristic functions on $\R$. Suppose that
    \begin{align}
        \Phi(f_1(\xi),f_2(\xi))\in \cM
        \qquad\text{for all }\xi\in\R.
    \end{align}
    Under what conditions can one conclude that $f_1,f_2\in \cM$?
\end{question}

One may also ask for stability versions of the above questions.

\appendix

\section{A discrete analog}\label{sec:discrete}

In this appendix we record a discrete counterpart of the framework in Sections~\ref{sec:counter-examples} and \ref{sec:general}, obtained by replacing characteristic functions by probability generating functions. Recall that the probability generating function (pgf) of an $\mathbb N_0$-valued random variable $X$ is
\begin{align}
    G_X(z)\coloneqq \bE[z^X], \qquad |z|\le 1.
\end{align}
The discrete analog of $\cC$ is
\begin{align}
    \cB\coloneqq \{\eta:[0,\infty)\to[0,\infty): \eta \text{ is Bernstein and } \eta(0)=0\},
\end{align}
see \cite[Definition 3.1]{MR2978140}. Every $\eta\in\cB$ admits a unique representation
\begin{equation}\label{eq:Bernstein-rep}
    \eta(u)=bu+\int_{(0,\infty)}\bigl(1-e^{-us}\bigr)\,\nu(ds),
\end{equation}
where $b\ge 0$ and $\nu$ is a measure on $(0,\infty)$ satisfying $\int_{(0,\infty)} (1\wedge s)\,\nu(ds)<\infty,$ see \cite[Theorem 3.2]{MR2978140}.

\begin{definition}
    We say that $\eta\in \cB\setminus\{0\}$ is \emph{indecomposable} if for every decomposition
    \begin{align}
        \eta=\eta_1+\eta_2,\qquad \eta_1,\eta_2\in\cB,
    \end{align}
    there exists $a\in[0,1]$ such that $\eta_1=a\eta$ and $\eta_2=(1-a)\eta$.
\end{definition}

Using a method similar to that in the proof of Proposition~\ref{prop:indecomposable-rays}, one obtains the following.

\begin{proposition}
    The indecomposable elements of $\cB$ are precisely the positive multiples of $u$ and
    $1-e^{-tu}$ for some $t>0$.
\end{proposition}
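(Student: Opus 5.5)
The plan is to copy the proof of Proposition~\ref{prop:indecomposable-rays}, with the L\'evy--Khintchine representation \eqref{eq:symmetric-LK} replaced by the Bernstein representation \eqref{eq:Bernstein-rep}. Throughout we use that the pair $(b,\nu)$ in \eqref{eq:Bernstein-rep} is uniquely determined by $\eta$, and that for any $b\ge0$ and any measure $\nu$ with $\int(1\wedge s)\,\nu(ds)<\infty$ the right-hand side of \eqref{eq:Bernstein-rep} is a Bernstein function vanishing at $0$. In particular, the restriction of an admissible $\nu$ to a Borel set is again admissible. Both facts are contained in \cite[Theorem 3.2]{MR2978140}.

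\emph{Necessity.} Let $\eta\in\cB\setminus\{0\}$ be indecomposable, with triplet $(b,\nu)$.

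If $b>0$ and $\nu\neq0$, then $\eta = bu + \int(1-e^{-us})\,\nu(ds)$ splits $\eta$ into two nonzero elements of $\cB$. These summands are not proportional to $\eta$, because by uniqueness the first has vanishing measure part and the second has vanishing drift.

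If $b=0$ and $\nu$ is not a positive multiple of a Dirac mass, choose a Borel set $A\subset(0,\infty)$ with $\nu|_A\neq0$ and $\nu|_{A^\complement}\neq0$. Splitting the integral along $A$ and $A^\complement$ then gives a nontrivial decomposition.

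Two cases remain. Either $\nu=0$, so $\eta=bu$ with $b>0$. Or $b=0$ and $\nu=c\delta_t$, so $\eta=c(1-e^{-tu})$ with $c,t>0$.

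\emph{Sufficiency.} Suppose $cu=\eta_1+\eta_2$ with $\eta_j\in\cB$, where $\eta_j$ has triplet $(b_j,\nu_j)$. Adding the representations gives a Bernstein representation of $cu$ with triplet $(b_1+b_2,\nu_1+\nu_2)$. By uniqueness, $\nu_1+\nu_2=0$ and $b_1+b_2=c$. Positivity of the measures forces $\nu_1=\nu_2=0$. Hence $\eta_j=b_j u$, and we may take $a=b_1/c$.

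Similarly, suppose $c(1-e^{-tu})=\eta_1+\eta_2$. Uniqueness gives $b_1=b_2=0$ and $\nu_1+\nu_2=c\delta_t$. Since $0\le\nu_j\le c\delta_t$, each $\nu_j$ is supported in $\{t\}$, so $\nu_j=c_j\delta_t$ with $c_1+c_2=c$. Taking $a=c_1/c$ finishes the argument.

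The only step needing care is the justification of uniqueness of $(b,\nu)$. If \cite[Theorem 3.2]{MR2978140} is not quoted directly, one can argue as follows. The drift is recovered as $b=\lim_{u\to\infty}\eta(u)/u$, using dominated convergence together with $(1-e^{-us})/u\to0$. Next,
\begin{align*}
\eta'(u)-b=\int_{(0,\infty)} s\,e^{-us}\,\nu(ds)
\end{align*}
is the Laplace transform of the measure $s\,\nu(ds)$, and this measure is finite on $(0,1]$. Uniqueness of Laplace transforms then determines $s\,\nu(ds)$, hence $\nu$. Everything else in the proof is routine.
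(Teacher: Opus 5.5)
Your proposal is correct and takes the route the paper indicates: you carry over the proof of Proposition~\ref{prop:indecomposable-rays}, replacing the L\'evy--Khintchine representation with the Bernstein representation \eqref{eq:Bernstein-rep} and using its uniqueness. Your extra checks fill in details the paper leaves implicit, namely that the splittings are not proportional to $\eta$, and the self-contained recovery of $b$ and $\nu$ via $\lim_{u\to\infty}\eta(u)/u$ and the Laplace transform of $s\,\nu(ds)$.
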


\begin{lemma}\label{lem:discrete-L}
    Let $L:[0,\infty)\to(0,1]$ be injective and completely monotone with $L(0)=1$. If $\eta\in\cB$ and $a\ge 0$, then
    \begin{align}
        G_a^\eta(z)\coloneqq L(a\eta(1-z)), \qquad |z|\le 1,
    \end{align}
    is a probability generating function.
\end{lemma}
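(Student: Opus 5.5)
The plan mirrors the proof of Lemma~\ref{lem:cm-kernel}, with Schoenberg's theorem replaced by the classical fact that $z\mapsto e^{-t\eta(1-z)}$ is the probability generating function of a compound Poisson law on $\mathbb N_0$ (a subordinator evaluated at time $t$, restricted to integer values). By the Bernstein--Widder theorem there is a probability measure $\rho$ on $[0,\infty)$ with $L(s)=\int_{[0,\infty)}e^{-st}\,\rho(dt)$, so that
\begin{align}
    G_a^\eta(z)=\int_{[0,\infty)} e^{-ta\eta(1-z)}\,\rho(dt).
\end{align}
Since $L(0)=1$, $\rho$ has total mass one. It therefore suffices to show that, for every $t\ge 0$, the function $g_t(z)\coloneqq e^{-ta\eta(1-z)}$ is a pgf. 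A $\rho$-mixture of pgfs is again a pgf: its coefficients are $\rho$-averages of nonnegative numbers summing to one, and Tonelli's theorem justifies exchanging the sum and the integral.

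To show that $g_t$ is a pgf, I would use the representation \eqref{eq:Bernstein-rep}. First I would extend $\eta$ to $\Re w\ge 0$ by the same formula; the integral converges because $|1-e^{-ws}|\le 2\wedge |w|s$ there. Substituting $w=1-z$ with $|z|\le 1$ gives $\Re w\ge 0$. Then
\begin{align}
    -ta\,\eta(1-z)=-tab(1-z)+ta\int_{(0,\infty)}\bigl(e^{-s}e^{sz}-1\bigr)\,\nu(ds).
\end{align}
The drift term $e^{-tab(1-z)}$ is the pgf of a Poisson$(tab)$ law. For the integral term, $e^{-s}e^{sz}=\sum_{k\ge0}e^{-s}s^kz^k/k!$ is the pgf of a Poisson$(s)$ variable $N_s$. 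Hence the integral equals $\int(\bE z^{N_s}-1)\,\nu(ds)=\sum_{k\ge1}\mu_k(z^k-1)$, where
\begin{align}
    \mu_k\coloneqq\int e^{-s}s^k/k!\,\nu(ds)\ge0,\qquad \sum_{k\ge1}\mu_k=\int(1-e^{-s})\,\nu(ds)<\infty.
\end{align}
In the last identity the $k=0$ term cancels against the $-1$. Thus $e^{ta\sum_k\mu_k(z^k-1)}$ is the pgf of a compound Poisson law with jump distribution proportional to $(\mu_k)_{k\ge 1}$ on $\N$. Products of pgfs are pgfs, so $g_t$ is a pgf. Finally $G_a^\eta(1)=L(0)=1$.

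I expect the main obstacle to be the rigorous interchange of sum and integral in the compound Poisson identification, in particular near $s=0$, where $\nu$ may have infinite mass. The bounds $\sum_k\mu_k\le\int(1\wedge s)\,\nu(ds)<\infty$ and $|z^k-1|\le 2$ give absolute convergence, so dominated convergence (or Fubini) applies. A minor additional point is to check that the Bernstein--Widder representation and the extension of $\eta$ are consistent at $z=1$, where $\eta(0)=0$.
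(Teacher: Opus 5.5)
Your proof is correct. The outer layer (Bernstein--Widder for $L$, then averaging over $\rho$) is the same as the paper's, but you use a different key step to show that $g_t(z)=e^{-ta\eta(1-z)}$ is a pgf.

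The paper argues abstractly. For Bernstein $\eta$, the map $u\mapsto e^{-ta\eta(u)}$ is completely monotone, so a second application of Bernstein--Widder gives $e^{-ta\eta(u)}=\int e^{-su}\,\mu_t(ds)$. Putting $u=1-z$ then exhibits $g_t$ as a mixed Poisson pgf. You instead work directly from \eqref{eq:Bernstein-rep}. You exhibit $g_t$ as a Poisson$(tab)$ pgf times a compound Poisson pgf with explicit weights $\mu_k=\int e^{-s}s^k/k!\,\nu(ds)$. Your bound $\sum_{k\ge1}\mu_k|z^k-1|\le 2\int(1-e^{-s})\,\nu(ds)<\infty$ does justify the interchange of sum and integral.

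The two routes describe the same law. It is the law of $N(S_{ta})$, where $S$ is the subordinator with Laplace exponent $\eta$ and $N$ is an independent unit-rate Poisson process. The paper sees it as Poisson mixed over the law $\mu_t$ of $S_{ta}$. You see it as compound Poisson with L\'evy weights $tab\,\delta_1+ta\sum_k\mu_k\delta_k$ on $\mathbb N$. So your opening heuristic ``a subordinator restricted to integer values'' should read ``a Poisson process subordinated by $S$''.

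The paper's route is shorter and matches the random-time picture of Remark~\ref{remark-probabilisitc-interpretation}. However, it tacitly uses analytic continuation (or restriction to real $z$) when substituting $u=1-z$. Your route needs only \eqref{eq:Bernstein-rep}, treats complex $|z|\le 1$ directly, and gives the explicit discrete L\'evy--Khintchine structure of each $g_t$.
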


\begin{proof}
    By the Bernstein--Widder theorem \cite[Chapter IV, Theorem 12a]{MR5923}, there exists a probability measure $\rho$ on $[0,\infty)$ such that
    \begin{align}
        L(s)=\int_{[0,\infty)} e^{-st}\,\rho(dt), \qquad 
        G_a^\eta(z)=\int_{[0,\infty)} e^{-ta\eta(1-z)}\,\rho(dt).
    \end{align}
    Since $\eta$ is Bernstein, the function $u\mapsto e^{-ta\eta(u)}$ is completely monotone for every $t\ge 0$ \cite[Theorem 3.7]{MR2978140}. Therefore, again by Bernstein--Widder, there exists a probability measure $\mu_t$ on $[0,\infty)$ such that
    \begin{align}
        e^{-ta\eta(1-z)}=\int_{[0,\infty)} e^{-s(1-z)}\,\mu_t(ds).
    \end{align}
    For each $s\ge 0$, the function $z\mapsto e^{-s(1-z)}$ is the pgf of a Poisson random variable with mean $s$. Thus $e^{-ta\eta(1-z)}$ is a mixture of pgfs, hence itself a pgf. Averaging once more against $\rho$ shows that $G_a^\eta$ is a pgf.
\end{proof}

For such an $L$, define $\Phi_L$ as in \eqref{eq:definition-phi-l}. The same proof as in Theorem~\ref{thm:binary-characterization} yields the following.

\begin{theorem}\label{thm:discrete-binary}
    Let $L$ be as in Lemma~\ref{lem:discrete-L}, and let $\eta\in\cB$ be indecomposable. Suppose
    $\eta_1,\eta_2\in\cB$, and set $G_j(z)\coloneqq L(\eta_j(1-z)),$ $j=1,2.$ If
    \begin{align}
        \Phi_L(G_1(z),G_2(z))=L(\eta(1-z))
    \end{align}
    for all $z\in[0,1]$, then there exist $a_1,a_2\ge 0$ with $a_1+a_2=1$ such that
    \begin{align}
        G_j(z)=L(a_j\eta(1-z)), \qquad j=1,2.
    \end{align}
\end{theorem}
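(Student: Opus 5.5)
The argument follows the proof of Theorem~\ref{thm:binary-characterization} verbatim, with $\cC$ replaced by $\cB$ and the variable $\xi$ replaced by $u=1-z\in[0,1]$ for $z\in[0,1]$. The first step is to check that the compositions make sense. Since each $\eta_j\in\cB$ takes values in $[0,\infty)$, the values $G_j(z)=L(\eta_j(1-z))$ lie in $I_L=L([0,\infty))$. Hence $L^{-1}(G_j(z))$ is well defined, and $L^{-1}(G_j(z))=\eta_j(1-z)$ because $L$ is injective.

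Next, the hypothesis is unwound using the definition \eqref{eq:definition-phi-l}:
\begin{align}
    L\bigl(\eta_1(1-z)+\eta_2(1-z)\bigr)=L\bigl(\eta(1-z)\bigr),\qquad z\in[0,1].
\end{align}
Injectivity of $L$ then gives $\eta_1(u)+\eta_2(u)=\eta(u)$ for all $u\in[0,1]$.

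The main obstacle is that indecomposability is a statement about identities of functions on all of $[0,\infty)$, while we only know the sum identity on $[0,1]$. To bridge this, I would use that Bernstein functions extend analytically to the right half-plane $\{\Re u>0\}$; this is visible from \eqref{eq:Bernstein-rep}, since the integral converges locally uniformly there. Then $\eta_1+\eta_2-\eta$ is analytic on $\{\Re u>0\}$ and vanishes on $(0,1]$, so by the identity theorem it vanishes on $(0,\infty)$. Continuity at $0$, where all three functions vanish, gives $\eta_1+\eta_2=\eta$ on $[0,\infty)$. An alternative route is to differentiate the representation \eqref{eq:Bernstein-rep}: a Bernstein function is determined by its values on any interval, because $\eta'$ is completely monotone, hence real analytic on $(0,\infty)$.

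Finally, $\eta$ is indecomposable and $\eta_1,\eta_2\in\cB$, so there exists $a\in[0,1]$ with $\eta_1=a\eta$ and $\eta_2=(1-a)\eta$. Setting $a_1=a$ and $a_2=1-a$ gives $G_j(z)=L(a_j\eta(1-z))$ for $j=1,2$, which is the claim.
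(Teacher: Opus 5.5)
Your proof is correct and follows the paper's route, which simply says that the proof of Theorem~\ref{thm:binary-characterization} carries over: unwind $\Phi_L$, use injectivity of $L$ to obtain $\eta_1+\eta_2=\eta$, and then apply indecomposability. Your extra step is a genuine improvement. The hypothesis only gives $\eta_1+\eta_2=\eta$ on $[0,1]$, while indecomposability concerns identities on all of $[0,\infty)$; your analytic-continuation argument (Bernstein functions extend holomorphically to $\{\Re u>0\}$, so the identity theorem applies) supplies exactly the point that the paper's ``same proof'' leaves implicit.
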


\begin{example}
    \begin{enumerate}
        \item Take $L(s)=e^{-s},$ so that $\Phi_L(x,y)=xy.$ If $\eta(u)=u,$ then $L(a\eta(1-z))=e^{-a(1-z)},$ $a>0,$ is the pgf of a Poisson law with parameter $a$. Thus multiplication characterizes the Poisson family inside $\{e^{-\eta(1-z)}:\eta\in\cB\}.$ This is a special case of Raikov's theorem. See \cite{raikov1937decomposition,MR428382}.

        \item Take $L(s)=(1+s)^{-1},$ so that $\Phi_L(x,y)=(x^{-1}+y^{-1}-1)^{-1}.$ If $\eta(u)=u,$ then $L(a\eta(1-z))=(1+a(1-z))^{-1},$ $a>0,$ which is the pgf of a geometric law on $\mathbb N_0$. Thus $\Phi_L$ characterizes the geometric family inside $\left\{(1+\eta(1-z))^{-1}:\eta\in\cB\right\}.$
    \end{enumerate}
\end{example}

We next record a discrete analog of Proposition~\ref{prop:first-counter-example-2}.

\begin{proposition}\label{prop:discrete-kac-counterexample}
    Fix $\lambda>0$ and $\theta\in(0,1/2)$. Then the functions
    \begin{align}
        G_1(z)\coloneqq \frac12+\frac{1}{2\bigl(1+\theta\lambda(1-z)\bigr)},
        \qquad
        G_2(z)\coloneqq \frac{2+\theta\lambda(1-z)}
        {2+2\lambda(1-z)+\theta\lambda^2(1-z)^2}.
    \end{align}
    are probability generating functions satisfying
    \begin{align}\label{eq:discrete-phi}
        \Phi(G_1(z),G_2(z))=\frac{1}{1+\lambda(1-z)},
        \qquad
        \Phi(x,y)\coloneqq \frac{1}{x^{-1}+y^{-1}-1},
    \end{align}
    but neither $G_1$ nor $G_2$ is of the form $(1+\mu(1-z))^{-1}$.
\end{proposition}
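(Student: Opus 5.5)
The plan is to deduce everything from the continuous counterexample in Proposition~\ref{prop:first-counter-example-2}, via the substitution $\xi^2 \leftrightarrow \lambda(1-z)/\theta$ and not via any new positivity computation. Set $u=\lambda(1-z)$. Then
\begin{align*}
    G_1=\frac12+\frac{1}{2(1+\theta u)},
    \qquad
    G_2=\frac{2+\theta u}{2+2u+\theta u^2}.
\end{align*}
A direct computation gives $1/G_1=2(1+\theta u)/(2+\theta u)$ and $1/G_2=(2+2u+\theta u^2)/(2+\theta u)$. Their sum is
\begin{align*}
    \frac{4+2\theta u+2u+\theta u^2}{2+\theta u}=\frac{(2+\theta u)(1+u)}{2+\theta u}=1+u,
\end{align*}
so $1/G_1+1/G_2-1=1+\lambda(1-z)$. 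This proves \eqref{eq:discrete-phi}.

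To show these are pgfs, write $G_j(z)=h_j(\lambda(1-z))$ with
\begin{align*}
    h_1(u)=\tfrac12+\tfrac{1}{2(1+\theta u)},
    \qquad
    h_2(u)=\frac{2+\theta u}{2+2u+\theta u^2}.
\end{align*}
The key observation is that $h_j(\xi^2)$ are exactly the functions $f_1,f_2$ of Proposition~\ref{prop:first-counter-example-2} with $a=\theta\in(0,1/2)$.

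\emph{The function $h_1$.} It is the Laplace transform of the probability measure $\frac12\delta_0+\frac{1}{2\theta}e^{-s/\theta}ds$ on $[0,\infty)$.

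\emph{The function $h_2$.} Use the partial fractions from that proof:
\begin{align*}
    h_2(u)=\frac{A_+}{u+\lambda_-}+\frac{A_-}{u+\lambda_+}.
\end{align*}
This is the Laplace transform of $q(s)=A_+e^{-\lambda_- s}+A_-e^{-\lambda_+ s}$ with $\lambda_+>\lambda_->0$, $A_+>0>A_-$. Since $q$ is decreasing in the negative term, $q\ge 0$ on $[0,\infty)$ amounts to $q(0)=A_++A_-\ge0$. But $A_++A_-=\lim_{u\to\infty}u\,h_2(u)=1$, which settles positivity, and the total mass is $h_2(0)=1$. So $h_2$ is completely monotone and a probability Laplace transform.

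With both $h_j(u)=\int e^{-us}\mu_j(ds)$ for probability measures $\mu_j$, we get
\begin{align*}
    G_j(z)=\int e^{-s\lambda(1-z)}\,\mu_j(ds),
\end{align*}
a mixture of Poisson pgfs. Hence each $G_j$ is a pgf, exactly as in the proof of Lemma~\ref{lem:discrete-L}. The main obstacle is this positivity of the mixing density for $h_2$, and it is handled by the computation $A_++A_-=1$. I would double-check the sign conventions $A_\pm$ against $\lambda_\mp$ from Proposition~\ref{prop:first-counter-example-2}.

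Finally, suppose $G_1=(1+\mu(1-z))^{-1}$. Then $1/G_1=2(1+\theta u)/(2+\theta u)$ would have to be affine in $u$. It is not, since it is bounded as $u\to\infty$ yet nonconstant; as rational functions of $z$ this is an identity failure. For $G_2$, the function $1/G_2$ is affine only if $2+\theta u$ divides $2+2u+\theta u^2$. Evaluating at $u=-2/\theta$ gives $4/\theta-4/\theta+2\ne0$, so it does not divide, and neither $G_j$ has the geometric form.
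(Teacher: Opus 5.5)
Your proof is correct in substance, but it proves the pgf property by a different mechanism than the paper. The paper works directly in $z$. It splits $G_2(z)=A/(1-pz)+B/(1-qz)$ with $0<q<p<1$ and bounds the Taylor coefficients by $Ap^n+Bq^n\ge p^n(A+B)=p^nG_2(0)>0$. It treats $G_1$ as a convex combination of $1$ and a geometric pgf.

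You instead show that $h_1$ and $h_2$ are Laplace transforms of probability measures on $[0,\infty)$, and then write each $G_j$ as a Poisson mixture, which is exactly the mechanism of Lemma~\ref{lem:discrete-L}. For $h_2$ the only thing to check is $q\ge 0$, and this reduces to the clean, $\lambda$-independent identity $A_++A_-=\lim_{u\to\infty}u\,h_2(u)=1$.

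The two routes buy different things. The paper's route gives the point probabilities explicitly. Yours gives a stronger structural fact: $G_1$ and $G_2$ are mixed Poisson laws with Poisson mean $\lambda S_j$, $S_j\sim\mu_j$. Moreover, $h_2(\xi^2)=\int_0^\infty e^{-s\xi^2}q(s)\,ds$ is then a mixture of centered Gaussian characteristic functions, so your one-line check also gives a simpler proof of the positivity in Proposition~\ref{prop:first-counter-example-2}.

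The implication only runs in that direction. A characteristic function of the form $h(\xi^2)$ need not have $h$ completely monotone. So, contrary to your opening sentence, you could not simply inherit positivity from Proposition~\ref{prop:first-counter-example-2}. Only the partial fractions carry over, and you were right to do a separate positivity check.

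Your divisibility test at $u=-2/\theta$ for $G_2$ is also more careful than the paper's remark that the denominator is quadratic. That remark tacitly needs the fraction to be in lowest terms, which is exactly what your test establishes.

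There are two slips to fix.
\begin{enumerate}
\item In verifying \eqref{eq:discrete-phi}, the numerator factors as $4+2\theta u+2u+\theta u^2=(2+\theta u)(2+u)$, not $(2+\theta u)(1+u)$. Hence $1/G_1+1/G_2=2+u$ and $1/G_1+1/G_2-1=1+u$. As written, your chain claims the sum itself is $1+u$ and then that the sum minus $1$ is again $1+u$.
\item The substitution in your first sentence should be $\xi^2\leftrightarrow\lambda(1-z)$ with $a=\theta$, not $\lambda(1-z)/\theta$. This matches what you actually use afterwards.
\end{enumerate}
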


\begin{proof}
    A direct computation shows that \eqref{eq:discrete-phi} holds. The function $G_1$ is a convex combination of the pgf $1$ of $\delta_0$ and the geometric pgf $(1+\theta\lambda(1-z))^{-1}$, hence it is a pgf.
    
    It remains to prove that $G_2$ is a pgf. Set
    \begin{align}
        r\coloneqq \sqrt{1-2\theta}\in (0,1), \qquad
        p\coloneqq \frac{\lambda(1+r)}{2+\lambda(1+r)}, \qquad
        q\coloneqq \frac{\lambda(1-r)}{2+\lambda(1-r)}.
    \end{align}
    A partial fraction computation gives
    \begin{align}
        G_2(z)=\frac{A}{1-pz}+\frac{B}{1-qz},\qquad 
        A=\frac{1+r-\theta}{r\bigl(2+\lambda(1+r)\bigr)},
        \qquad
        B=\frac{\theta+r-1}{r\bigl(2+\lambda(1-r)\bigr)}.
    \end{align}
    Since $0<q<p<1$, we may expand
    \begin{align}
        G_2(z)=\sum_{n=0}^\infty (Ap^n+Bq^n)z^n.
    \end{align}
    If $B\ge 0$, then all coefficients are nonnegative. If $B<0$, then
    \begin{align*}
        Ap^n+Bq^n
        =p^n\left(A+B\left(\frac{q}{p}\right)^n\right)
        \ge p^n(A+B)=p^nG_2(0)=p^n\frac{2+\theta\lambda}{2+2\lambda+\theta\lambda^2}>0,
    \end{align*}
    so again every coefficient is positive. Since also $G_2(1)=1$, it follows that $G_2$ is a pgf.
    
    Finally, neither $G_1$ nor $G_2$ belongs to the geometric family $\left\{(1+\mu(1-z))^{-1}:\mu>0\right\}.$ Indeed, for such a family one has $G^{-1}-1$ affine in $(1-z)$. But
    \begin{align}
        G_1(z)^{-1}-1=\frac{\theta\lambda(1-z)}{2+\theta\lambda(1-z)}
    \end{align}
    is not affine, and $G_2$ has denominator quadratic in $(1-z)$.
\end{proof}

We also have an analog of Proposition \ref{prop:first-counter-example-2-general}.
\begin{corollary}
    For every $n\in\mathbb N$, $\lambda>0$, and $\theta\in(0,1/2)$, the functions
    \begin{align}
        F_1(z)\coloneqq \left(\frac12+\frac{1}{2\bigl(1+\frac{\theta\lambda}{n}(1-z)\bigr)}\right)^n,
        \qquad
        F_2(z)\coloneqq \left(
        \frac{2+\frac{\theta\lambda}{n}(1-z)}
        {2+\frac{2\lambda}{n}(1-z)+\frac{\theta\lambda^2}{n^2}(1-z)^2}
        \right)^n
    \end{align}
    are pgfs satisfying
    \begin{align*}
        \Phi_n(F_1(z),F_2(z))
        =
        \left(1+\frac{\lambda}{n}(1-z)\right)^{-n},
        \qquad
        \Phi_n(x,y)\coloneqq \bigl(x^{-1/n}+y^{-1/n}-1\bigr)^{-n},
    \end{align*}
    but neither factor belongs to the negative binomial family $\left\{\left(1+\frac{\mu}{n}(1-z)\right)^{-n}:\mu>0\right\}.$
\end{corollary}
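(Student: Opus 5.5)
The plan is to reduce everything to Proposition~\ref{prop:discrete-kac-counterexample} by rescaling and taking $n$-th powers, in the same way Proposition~\ref{prop:first-counter-example-2-general} was deduced from Proposition~\ref{prop:first-counter-example-2}.

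First, apply Proposition~\ref{prop:discrete-kac-counterexample} with $\lambda$ replaced by $\lambda/n$, which is still positive. This gives pgfs $u_1,u_2$, namely exactly the inner bracketed expressions in $F_1,F_2$, satisfying
\begin{align*}
u_1(z)^{-1}+u_2(z)^{-1}-1=1+\frac{\lambda}{n}(1-z).
\end{align*}
A product of pgfs is the pgf of the sum of independent random variables. Hence $F_j=u_j^n$ is a pgf, namely that of the $n$-fold convolution power.

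Next, verify the identity for $\Phi_n$. I restrict to $z\in[0,1]$, or interpret the identity in the same sense as in Theorem~\ref{thm:discrete-binary}. For real $z\in[0,1]$ we have $1-z\ge 0$, so both rational expressions $u_j(z)$ are positive reals. Indeed, $u_1(z)$ is visibly positive, and $u_2(z)$ has positive numerator and denominator. The real $n$-th roots therefore satisfy $F_j(z)^{-1/n}=u_j(z)^{-1}$. Then
\begin{align*}
\Phi_n(F_1,F_2)=\bigl(u_1^{-1}+u_2^{-1}-1\bigr)^{-n}=\Bigl(1+\frac{\lambda}{n}(1-z)\Bigr)^{-n}.
\end{align*}
This identity on $[0,1]$ then extends to the unit disc by analytic continuation, wherever the principal branch is used.

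Finally, show non-membership in the negative binomial family. Suppose $F_j=(1+\frac{\mu}{n}(1-z))^{-n}$ on $[0,1]$. Taking positive $n$-th roots gives $u_j=(1+\frac{\mu}{n}(1-z))^{-1}$, so $u_j$ would lie in the geometric family. This contradicts the last assertion of Proposition~\ref{prop:discrete-kac-counterexample}, applied with parameter $\lambda/n$. Concretely, $u_1^{-1}-1$ is not affine in $1-z$, and $u_2^{-1}-1$ is not affine either, since $u_2^{-1}$ is a quadratic over a linear polynomial that does not reduce.

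The only delicate point is the $n$-th root step, which must avoid branch ambiguity. This is handled by working with positive real values on $[0,1]$, where rational functions are determined. I expect no real obstacle beyond this.
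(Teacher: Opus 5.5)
Your proposal is correct and is exactly the argument the paper intends: the corollary is stated without proof as the discrete analog of Proposition~\ref{prop:first-counter-example-2-general}, whose proof is this same pattern of rescaling ($\lambda\mapsto\lambda/n$ in Proposition~\ref{prop:discrete-kac-counterexample}), taking $n$-th powers, and identifying $F_j^{-1/n}=u_j^{-1}$ by positivity on $[0,1]$. One minor caution: your aside about extending to the unit disc holds for the analytic branch $u_j^{-1}$ of $F_j^{-1/n}$, but the principal branch need not coincide with $u_j^{-1}$ off $[0,1]$; the statement does not need that aside, so you can simply drop it.
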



\bibliographystyle{amsalpha}
\bibliography{references}

@book {MR481057,
    AUTHOR = {Berg, Christian and Forst, Gunnar},
     TITLE = {Potential theory on locally compact abelian groups},
    SERIES = {Ergebnisse der Mathematik und ihrer Grenzgebiete [Results in
              Mathematics and Related Areas]},
    VOLUME = {Band 87},
 PUBLISHER = {Springer-Verlag, New York-Heidelberg},
      YEAR = {1975},
     PAGES = {vii+197},
   MRCLASS = {31C05 (43A35 60B15)},
  MRNUMBER = {481057},
MRREVIEWER = {Francis\ Hirsch},
}

@article {MR1545629,
    AUTHOR = {Cram\'er, Harald},
     TITLE = {\"Uber eine {E}igenschaft der normalen {V}erteilungsfunktion},
   JOURNAL = {Math. Z.},
  FJOURNAL = {Mathematische Zeitschrift},
    VOLUME = {41},
      YEAR = {1936},
    NUMBER = {1},
     PAGES = {405--414},
      ISSN = {0025-5874,1432-1823},
   MRCLASS = {99-04},
  MRNUMBER = {1545629},
       DOI = {10.1007/BF01180430},
       URL = {https://doi.org/10.1007/BF01180430},
}

@book {MR346969,
    AUTHOR = {Kagan, A. M. and Linnik, Yu.\ V. and Rao, C. Radhakrishna},
     TITLE = {Characterization problems in mathematical statistics},
    SERIES = {Wiley Series in Probability and Mathematical Statistics},
      NOTE = {Translated from the Russian by B. Ramachandran},
 PUBLISHER = {John Wiley \& Sons, New York-London-Sydney},
      YEAR = {1973},
     PAGES = {xii+499},
   MRCLASS = {62E10},
  MRNUMBER = {346969},
}

@article {MR773445,
    AUTHOR = {Klebanov, L. B. and Maniya, G. M. and Melamed, I. A.},
     TITLE = {A problem of {V}. {M}. {Z}olotarev and analogues of infinitely
              divisible and stable distributions in a scheme for summation
              of a random number of random variables},
   JOURNAL = {Teor. Veroyatnost. i Primenen.},
  FJOURNAL = {Akademiya Nauk SSSR. Teoriya Veroyatnoste\u i\ i ee
              Primeneniya},
    VOLUME = {29},
      YEAR = {1984},
    NUMBER = {4},
     PAGES = {757--760},
      ISSN = {0040-361X},
   MRCLASS = {60E07 (60G50)},
  MRNUMBER = {773445},
MRREVIEWER = {Gerd\ Christoph},
}

@book {MR189085,
    AUTHOR = {Linnik, Yu.\ V.},
     TITLE = {Decomposition of probability distributions},
 PUBLISHER = {Dover Publications, Inc., New York; Oliver and Boyd Ltd.,
              Edinburgh-London},
      YEAR = {1964},
     PAGES = {xii+242},
   MRCLASS = {60.20},
  MRNUMBER = {189085},
}

@book {MR428382,
    AUTHOR = {{Linnik, Ju.\ V. and Ostrovs'ki\u i, \u I. V.}},
     TITLE = {Decomposition of random variables and vectors},
    SERIES = {Translations of Mathematical Monographs},
    VOLUME = {Vol. 48},
      NOTE = {Translated from the Russian},
 PUBLISHER = {American Mathematical Society, Providence, RI},
      YEAR = {1977},
     PAGES = {ix+380},
   MRCLASS = {60E05 (60G50)},
  MRNUMBER = {428382},
}

@incollection {MR133854,
    AUTHOR = {Lukacs, Eugene},
     TITLE = {Recent developments in the theory of characteristic functions},
 BOOKTITLE = {Proc. 4th {B}erkeley {S}ympos. {M}ath. {S}tatist. and {P}rob.,
              {V}ol. {II}},
     PAGES = {307--335},
 PUBLISHER = {Univ. California Press, Berkeley-Los Angeles, Calif.},
      YEAR = {1960},
   MRCLASS = {60.20},
  MRNUMBER = {133854},
MRREVIEWER = {R.\ G.\ Laha},
}

@book {MR346874,
    AUTHOR = {Lukacs, Eugene},
     TITLE = {Characteristic functions},
   EDITION = {Second},
 PUBLISHER = {Hafner Publishing Co., New York},
      YEAR = {1970},
     PAGES = {x+350},
   MRCLASS = {60E05 (62EXX)},
  MRNUMBER = {346874},
}

@book {MR810001,
    AUTHOR = {Lukacs, Eugene},
     TITLE = {Developments in characteristic function theory},
 PUBLISHER = {Macmillan Co., New York},
      YEAR = {1983},
     PAGES = {viii+182},
      ISBN = {0-02-848550-5},
   MRCLASS = {60E10},
  MRNUMBER = {810001},
MRREVIEWER = {F.\ W.\ Steutel},
}

@book {MR3242261,
     TITLE = {The {S}cottish {B}ook},
    EDITOR = {Mauldin, R. Daniel},
   EDITION = {Second},
      NOTE = {Mathematics from the Scottish Caf\'e{} with selected problems
              from the new Scottish Book,
              Including selected papers presented at the Scottish Book
              Conference held at North Texas University, Denton, TX, May
              1979},
 PUBLISHER = {Birkh\"auser/Springer, Cham},
      YEAR = {2015},
     PAGES = {xvii+322},
      ISBN = {978-3-319-22896-9; 978-3-319-22897-6},
   MRCLASS = {00B25 (01A60 03E15)},
  MRNUMBER = {3242261},
MRREVIEWER = {Godofredo\ Iommi Amun\'ategui},
       DOI = {10.1007/978-3-319-22897-6},
       URL = {https://doi.org/10.1007/978-3-319-22897-6},
}

@inproceedings{raikov1937decomposition,
  title={On the decomposition of Poisson laws},
  author={Raikov, D},
  booktitle={Dokl. Akad. Nauk SSSR},
  volume={14},
  pages={9--12},
  year={1937}
}

@book {MR2978140,
    AUTHOR = {Schilling, Ren\'e{} L. and Song, Renming and Vondra{\v{c}}ek,
              Zoran},
     TITLE = {Bernstein functions},
    SERIES = {De Gruyter Studies in Mathematics},
    VOLUME = {37},
   EDITION = {Second},
      NOTE = {Theory and applications},
 PUBLISHER = {Walter de Gruyter \& Co., Berlin},
      YEAR = {2012},
     PAGES = {xiv+410},
      ISBN = {978-3-11-025229-3; 978-3-11-026933-8},
   MRCLASS = {60E07 (31C05 43A35 44A10 47A57 47D06 60E10 60Jxx)},
  MRNUMBER = {2978140},
MRREVIEWER = {David\ Applebaum},
       DOI = {10.1515/9783110269338},
       URL = {https://doi.org/10.1515/9783110269338},
}

@article {MR1503439,
    AUTHOR = {Schoenberg, I. J.},
     TITLE = {Metric spaces and completely monotone functions},
   JOURNAL = {Ann. of Math. (2)},
  FJOURNAL = {Annals of Mathematics. Second Series},
    VOLUME = {39},
      YEAR = {1938},
    NUMBER = {4},
     PAGES = {811--841},
      ISSN = {0003-486X,1939-8980},
   MRCLASS = {99-04},
  MRNUMBER = {1503439},
       DOI = {10.2307/1968466},
       URL = {https://doi.org/10.2307/1968466},
}

@article{steutel1990set,
  title={The set of geometrically infinitely divisible distributions},
  author={Steutel, FW},
  year={1990},
  journal={Memorandum COSOR},
  volume={9042},
  publisher={Technische Universiteit Eindhoven},
  pages={1-5}
}

@book {MR4837608,
    AUTHOR = {Stroock, Daniel W.},
     TITLE = {Probability theory, an analytic view},
   EDITION = {Third},
 PUBLISHER = {Cambridge University Press, Cambridge},
      YEAR = {2025},
     PAGES = {xxii+445},
      ISBN = {978-1-009-54900-4; [9781009549035]},
   MRCLASS = {60-01 (60B11 60F05 60G44 60G51 60J35)},
  MRNUMBER = {4837608},
}

@book {MR4573029,
    AUTHOR = {Stroock, Daniel W.},
     TITLE = {Gaussian measures in finite and infinite dimensions},
    SERIES = {Universitext},
 PUBLISHER = {Springer, Cham},
      YEAR = {[2023] \copyright 2023},
     PAGES = {xii+144},
      ISBN = {978-3-031-23121-6; 978-3-031-23122-3},
   MRCLASS = {60-02 (28C20 46G12 60B11 60G15)},
  MRNUMBER = {4573029},
MRREVIEWER = {Ramon\ van Handel},
       DOI = {10.1007/978-3-031-23122-3},
       URL = {https://doi.org/10.1007/978-3-031-23122-3},
}

@book {MR5923,
    AUTHOR = {Widder, David Vernon},
     TITLE = {The {L}aplace {T}ransform},
    SERIES = {Princeton Mathematical Series},
    VOLUME = {vol. 6},
 PUBLISHER = {Princeton University Press, Princeton, NJ},
      YEAR = {1941},
     PAGES = {x+406},
   MRCLASS = {42.4X},
  MRNUMBER = {5923},
MRREVIEWER = {J.\ D.\ Tamarkin},
}

\end{document}